\DeclarePairedDelimiter{\abs}{\lvert}{\rvert}
\DeclarePairedDelimiter{\norm}{\lVert}{\rVert}
\newcommand{\numberset}{\mathbb}
\newcommand{\R}{\numberset{R}} % reali
\renewcommand{\phi}{\varphi} % la phi
\renewcommand{\chi}{\mathcal{X}} % indicatrice con X storta
\newcommand{\Hst}{\tilde{H}^s}
\renewcommand{\epsilon}{\varepsilon}
\newcommand{\weak}{\rightharpoonup}
\newcommand{\weakstar}{\rightharpoonup^*}
\newtheorem{theorem}{Theorem}
\newtheorem{prop}[theorem]{Proposition}
\newtheorem{definition}[theorem]{Definition}
\newtheorem{conj}{Conjecture}
\newtheorem{remark}[theorem]{Remark}
\newenvironment{system}%
{\left\lbrace\begin{aligned}}%
{\end{aligned}\right.}
\begin{document}

\title{A variational scheme for hyperbolic obstacle problems}
\author{M. Bonafini\thanks{Dipartimento di  Matematica, Universit\`a di Trento, Italy, e-mail: mauro.bonafini@unitn.it}, M. Novaga\thanks{Dipartimento di Matematica, Universit\`a di Pisa, Italy, e-mail: matteo.novaga@unipi.it}, G. Orlandi\thanks{Dipartimento di Informatica, Universit\`a di Verona, Italy, e-mail: giandomenico.orlandi@univr.it}}
\date{\today}

\maketitle

\begin{abstract}
	We consider an obstacle problem for (possibly non-local) wave equations, and we prove existence of weak solutions through a convex minimization approach based on a time discrete approximation scheme. We provide the corresponding numerical implementation and raise some open questions.
\end{abstract}

\section{Introduction}

Obstacle type problems are nowadays a well established subject with many dedicated contributions in the recent literature. Obstacle problems for the minimizers of classical energies and regularity of the arising free boundary have been extensively studied, both for local operators (see, e.g. \cite{Ca, Ro18} and references therein) and non-local fractional type operators (see, e.g. \cite{Si} and the review \cite{Ro18}). The corresponding evolutive equations have also been considered, mainly in the parabolic context \cite{CaPeSh, CaFi13, NoOk15, BaFiRo18}. What seems to be missing in the picture is the hyperbolic scenario which, despite being in some cases as natural as the previous ones, has received little attention so far.

Among the available results for hyperbolic obstacle problems there is a series of works by Schatzman and collaborators \cite{Schatzman78, Schatzman80, Schatzman81, PaoliSchatzman02I}, where the existence of a solution is proved via penalty methods and, furthermore, existence of energy preserving solutions are proved in dimension $1$ whenever the obstacle is concave \cite{Schatzman80}. The problem is also considered in \cite{Maruo85}, where the author proves the existence of a (possibly dissipative) solution within a more general framework but under technical hypotheses. More recently the $1$d situation has been investigated in \cite{Ki09} through a minimization approach based on time discretization, see also \cite{GiSv09, OmKaNa09, Ta94, DaLa11} for contributions on related problems using the same point of view.
Another variational approach to hyperbolic problems, through an elliptic regularization suggested by De Giorgi, is given in \cite{SeTi} and subsequent papers (see for instance \cite{DaDe} for time dependent domains).

In this paper we use a convex minimization approach, relying on a semi-discrete approximation scheme (as in \cite{Ki09, GiSv09, DaLa11}), to deal with more general situations so as to include also non-local hyperbolic problems in the presence of obstacles, in arbitrary dimension. As main results we prove existence of a suitably defined weak solution to the wave equation involving the fractional Laplacian with or without an obstacle, together with the corresponding energy estimates. Those results are summarized in Theorem \ref{thm:main1} and Theorem \ref{thm:main2} (see Section \ref{sec:free} and \ref{sec:obstacle}). The approximating scheme allows to perform numerical simulations which give quite precise evidence of dynamical effects. In particular, based on our numerical experiments for the obstacle problem, we conjecture that this method is able to select, in cases of nonuniqueness, the most dissipative solution, that is to say the one losing the maximum amount of energy at contact times.

Eventually, we remark that this approach is quite robust and can be extended for instance to the case of adhesive phenomena: in these situations an elastic string interacts with a rigid substrate through an adhesive layer \cite{CocliteFlorioLigaboMaddalena17} and the potential energy governing the interaction can be easily incorporated in our variational scheme.

The paper is organized as follows. We first recall the main properties of the fractional Laplace operator and fractional Sobolev spaces in Section \ref{sec:fractional} and then, in Section \ref{sec:free}, we introduce the time-disretized variational scheme and apply it to the non-local wave equation (with the fractional Laplacian), proving Theorem \ref{thm:main1}. In Section \ref{sec:obstacle} we adapt the scheme so as to include the obstacle problem, proving existence of weak solutions in Theorem \ref{thm:main2}. In the last section we describe the corresponding numerical implementation providing some examples and we conclude with some remarks and open questions.

\section{Fractional Sobolev spaces and the fractional Laplacian operator}\label{sec:fractional}
In this section we briefly review the main definitions and properties of the fractional setting and we fix the notation used in the rest of the paper. For a more complete introduction to fractional Sobolev spaces we point to \cite{DiNezzaPalatucciValdinoci12, mclean2000strongly} and references therein.

\textbf{Fractional Sobolev spaces.} Let $\Omega \subset \R^d$ be an open set. For $s \in \R$, we define the Sobolev spaces $H^s(\Omega)$ as follows:
\begin{itemize}
	\item for $s \in (0,1)$ and $u \in L^2(\Omega)$, define the Gagliardo semi-norm of $u$ as
	\[
	[u]_{H^s(\Omega)} = \left( \int_\Omega \int_\Omega \frac{\abs{u(x)-u(y)}^2}{\abs{x-y}^{d+2s}} \, dxdy \right)^{\frac{1}{2}}.
	\]
	The fractional Sobolev space $H^s(\Omega)$ is then defined as
	\[
	H^s(\Omega) = \left\{ u \in L^2(\Omega) \,:\, [u]_{H^s(\Omega)} < \infty \right\},
	\]
	with norm $||u||_{H^s(\Omega)} = (||u||_{L^2(\Omega)}^2 + [u]_{H^s(\Omega)}^2)^{1/2}$;
	\item for $s \geq 1$ let us write $s = [s] + \{s\}$, with $[s]$ integer and $0\leq \{s\} < 1$. The space $H^s(\Omega)$ is then defined as
	\[
	H^s(\Omega) = \{ u \in H^{[s]}(\Omega) \,:\, D^\alpha u \in H^{\{s\}}(\Omega) \text{ for any } \alpha \text{ s.t. } |\alpha| = [s] \},
	\]
	with norm $||u||_{H^s(\Omega)} = (||u||_{H^{[s]}(\Omega)}^2+\sum_{|\alpha|=[s]}||D^\alpha u||_{H^{\{s\}}(\Omega)}^2)^{1/2}$;
	\item for $s < 0$ we define $H^s(\Omega) = (H^{-s}_0(\Omega))^*$, where as usual the space $H^s_0(\Omega)$ is obtained as the closure of $C^\infty_c(\Omega)$ in the $||\cdot||_{H^s(\Omega)}$ norm.
\end{itemize}

\textbf{Fractional Laplacian.} For any $s > 0$, denote by $(-\Delta)^s$ the fractional Laplace operator, which (up to normalization factors) can be defined as follows:
\begin{itemize}
	\item for $s \in (0,1)$, we set
	\[
	-(-\Delta)^s u(x) = \int_{\R^d} \frac{u(x+y)-2u(x)+u(x-y)}{\abs{y}^{d+2s}} \, dy, \quad x \in \R^d;
	\]
	\item for $s \geq 1$, $s = [s] + \{s\}$, we set $(-\Delta)^s = (-\Delta)^{\{s\}} \circ (-\Delta)^{[s]}$.
\end{itemize}
Let us define, for any $u, v \in H^s(\R^d)$, the bilinear form
\[
[u, v]_{s} = \int_{\R^{d}} (-\Delta)^{s/2}u(x) \cdot  (-\Delta)^{s/2}v(x) \, dx
\]
and the corresponding semi-norm $[u]_s = \sqrt{[u,u]_s} = ||(-\Delta)^{s/2}u||_{L^2(\R^d)}$. Define on $H^s(\R^d)$ the norm $||u||_s = (||u||_{L^2(\R^d)}^2 + [u]_s^2 )^{1/2}$, which in turn is equivalent to the norm $||\cdot||_{H^s(\R^d)}$.

\textbf{The spaces $\Hst(\Omega)$.} Let $s > 0$ and fix $\Omega$ to be an open bounded set with Lipschitz boundary. The space we are going to work with throughout this paper is
\[
\tilde{H}^s(\Omega) = \{ u \in H^s(\R^d) \,:\, u = 0 \text{ a.e. in } \R^d\setminus \Omega \},
\]
endowed with the $||\cdot||_s$ norm. This space corresponds to the closure of $C^\infty_c(\Omega)$ with respect to the $||\cdot||_s$ norm.
%, and it coincides with $H^s_0(\Omega)$ unless $s = n+1/2$, $n \in \N$, see \cite[Theorem 3.33]{mclean2000strongly}.
We have also $(\Hst(\Omega))^* = H^{-s}(\Omega)$, see \cite[Theorem 3.30]{mclean2000strongly}.

We finally recall the following embedding results (see \cite{DiNezzaPalatucciValdinoci12}).

\begin{theorem}
	Let $s > 0$. The following holds:
	\begin{itemize}
		\item if $2s < d$, then $\tilde H^s(\Omega)$ embeds in $L^q(\Omega)$ continuously for any $q \in [1,2^*]$ and compactly for any $q \in [1,2^*)$, with $2^* = 2d/(d-2s)$;
		\item if $2s = d$, then $\tilde H^s(\Omega)$ embeds in $L^q(\Omega)$ continuously for any $q \in [1,\infty)$ and compactly for any $q \in [1,2]$;
		\item if $2s > d$, then $\tilde H^s(\Omega)$ embeds continuously in $C^{0,\alpha}(\Omega)$ with $\alpha = (2s-d)/2$.
	\end{itemize}
\end{theorem}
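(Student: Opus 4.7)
The plan is to exploit the defining feature of $\tilde H^s(\Omega)$: every $u \in \tilde H^s(\Omega)$ extends by zero to an element of $H^s(\R^d)$ with the same $\|\cdot\|_s$ norm, so it suffices to establish the inequalities on $\R^d$ and then restrict. I would work with the Fourier characterization $u \in H^s(\R^d) \iff (1+|\xi|^2)^{s/2}\hat u \in L^2(\R^d)$, which makes $(-\Delta)^{s/2}$ a multiplier of order $s$ and reduces everything to mapping properties of the Riesz potential $I_s(x) = c_{d,s}|x|^{s-d}$ (valid for $0 < s < d$), via the identity $u = I_s \ast ((-\Delta)^{s/2}u)$.

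For the continuous embeddings, in the subcritical regime $2s < d$ the Hardy--Littlewood--Sobolev inequality applied to the above convolution yields
\[
\|u\|_{L^{2^*}(\R^d)} \le C\,\|(-\Delta)^{s/2}u\|_{L^2(\R^d)} = C[u]_s,
\]
which gives the embedding into $L^{2^*}(\Omega)$; the intermediate range $q \in [1,2^*]$ then follows by Hölder's inequality since $\Omega$ is bounded. In the critical regime $2s = d$, one replaces HLS by the fact that the Riesz potential (truncated to a bounded set) maps $L^2$ into every $L^q$ with $q < \infty$, which can be obtained either by an $L^p$--$L^q$ bound on $I_s$ over bounded domains or via a Trudinger--type exponential estimate. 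In the supercritical regime $2s > d$, a Morrey-type argument shows that $I_s \ast f$ is $(2s-d)/2$--Hölder continuous when $f \in L^2(\R^d)$, which can be verified by splitting the convolution kernel near the origin and applying Cauchy--Schwarz on each piece. For non-integer $s \geq 1$ the Riesz potential identity still applies after writing $(-\Delta)^{s/2}$ as a composition, so the same arguments go through.

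For the compactness statements I would invoke the Riesz--Fréchet--Kolmogorov criterion in $L^2(\Omega)$. Equi-integrability at infinity is automatic because every $u \in \tilde H^s(\Omega)$ vanishes outside $\Omega$, while $L^2$--continuity of translations can be controlled by $[u]_s$: for $s \in (0,1)$ the Gagliardo semi-norm directly bounds $\|u(\cdot+h)-u\|_{L^2}$ by $|h|^{s}[u]_{H^s(\R^d)}$, and for general $s$ one argues on the Fourier side, using $|e^{i\xi\cdot h}-1|^2 \le \min(4,|\xi|^2|h|^2)$ and splitting frequencies. This gives $L^2$--compactness of bounded sequences in $\tilde H^s(\Omega)$, and interpolating with the uniform $L^{2^*}$--bound (respectively $L^q$--bound for any $q<\infty$ in the critical case) promotes compactness to the stated range $q \in [1,2^*)$ (resp. $q\in[1,2]$, which in fact can be extended to any $q < \infty$ by the same interpolation).

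The main technical obstacle I expect is the endpoint organisation: making the HLS constant independent of $\Omega$ in a clean way, handling the critical case $2s=d$ without invoking heavy machinery, and carefully proving the uniform continuity of translations for arbitrary $s > 0$ (since for $s \ge 1$ the Gagliardo semi-norm is no longer directly available and one must pass through the Fourier side or exploit the fact that $D^\alpha u \in \tilde H^{\{s\}}$). Everything else essentially reduces, via zero-extension and Riesz potentials, to classical results on $\R^d$.
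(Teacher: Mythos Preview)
The paper does not actually prove this theorem: it is stated as a recalled embedding result with a bare citation to Di~Nezza--Palatucci--Valdinoci, so there is no in-paper argument to compare your proposal against. Your outline is the standard route and is essentially correct---continuous embeddings via the Riesz-potential representation and Hardy--Littlewood--Sobolev (subcritical), a Trudinger-type bound (critical), and a Morrey estimate (supercritical), followed by compactness via Riesz--Fr\'echet--Kolmogorov and interpolation---and it is in fact the same strategy carried out in the cited reference.

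Two small remarks. First, the identity $u = I_s \ast ((-\Delta)^{s/2}u)$ is valid only for $0<s<d$ and needs a density/approximation step to extend beyond Schwartz functions; for $s\ge d$ (which can occur when $2s>d$) you should instead argue directly on the Fourier side or reduce to derivatives in $H^{\{s\}}$. Second, you correctly observe that in the critical case your interpolation argument gives compactness in $L^q$ for every $q<\infty$, which is stronger than the range $q\in[1,2]$ recorded in the paper's statement; that discrepancy is in the statement itself, not in your reasoning.
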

\medskip

\section{A variational scheme for the fractional wave equation}\label{sec:free}

In this section, as a first step towards obstacle problems, we extend to the fractional wave equation a time-disretized variational scheme which traces back to Rothe \cite{Ro30} and since then has been extensively applied to many different hyperbolic type problems, see e.g. \cite{Ta94, Om97, SvadlenkaOmata08, DaLa11}.

Let $\Omega \subset \R^d$ be an open bounded domain with Lipschitz boundary.
Given $u_0 \in \Hst(\Omega)$ and $v_0 \in L^2(\Omega)$, the problem we are interested in is the following: find $u = u(t,x)$ such that
\begin{equation}\label{eq:freewaves}
\begin{system}
& u_{tt} + (-\Delta)^s u = 0             	&\quad&\text{in } (0,T) \times \Omega                                   	\\
& u(t,x) = 0                                &\quad&\text{in } [0,T] \times (\R^d \setminus \Omega)                      \\
& u(0,x) = u_0(x)                           &\quad&\text{in } \Omega                                                    \\
& u_t(0,x) = v_0(x)                         &\quad&\text{in } \Omega                                                    \\
\end{system}
\end{equation}
where the ``boundary'' condition is imposed on the complement of $\Omega$ due to the non-local nature of the fractional operator. In particular, we look for weak type solutions of \eqref{eq:freewaves}.
\begin{definition}\label{def:weak}
We say a function
\[
u \in L^\infty(0,T; \Hst(\Omega)) \cap W^{1,\infty}(0,T;L^2(\Omega)), \quad u_{tt} \in L^\infty(0,T;H^{-s}(\Omega)),
\]
is a weak solution of \eqref{eq:freewaves} if
\begin{equation}\label{eq:eqweak}
\int_{0}^T \int_\Omega u_{tt}(t) \phi(t) \,dxdt + \int_{0}^T [ u(t), \phi(t) ]_{s} \, dt = 0
\end{equation}
for all $\phi \in L^{1}(0,T;\Hst(\Omega))$ and the initial conditions are satisfied in the following sense:
\begin{equation}\label{eq:u0free}
\lim_{h\to0^+} \frac1h \int_0^h\left( ||u(t)-u_0||_{L^2(\Omega)}^2 + [u(t)-u_0]_{s}^2 \right)dt = 0
\end{equation}
and
\begin{equation}\label{eq:v0free}
\lim_{h\to0^+} \frac1h \int_0^h||u_t(t)-v_0||_{L^2(\Omega)}^2\,dt = 0.
\end{equation}
\end{definition}

\medskip

\noindent The aim of this section is then to prove the next theorem.
\begin{theorem}\label{thm:main1}
	There exists a weak solution of the fractional wave equation \eqref{eq:freewaves}.
\end{theorem}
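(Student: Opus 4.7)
The strategy is a Rothe-type minimizing movements scheme in time. Fix $N \in \N$, set $\tau = T/N$, and initialize $u^0 = u_0$ together with a ghost step $u^{-1} = u_0 - \tau v_0$ so that the backward difference $(u^0 - u^{-1})/\tau$ matches the prescribed initial velocity. Inductively define $u^{n+1} \in \Hst(\Omega)$ as the (unique) minimizer over $\Hst(\Omega)$ of the strictly convex, coercive, lower semicontinuous functional
\[
F_n(w) = \frac{1}{2\tau^2}\,\|w - 2u^n + u^{n-1}\|_{L^2(\Omega)}^2 + \frac{1}{2}\,[w]_s^2.
\]
Existence and uniqueness are standard consequences of the direct method together with the Hilbert structure of $\Hst(\Omega)$; the Euler--Lagrange equation is the semi-implicit discrete wave equation
\[
\int_\Omega \frac{u^{n+1}-2u^n+u^{n-1}}{\tau^2}\,\phi\,dx + [u^{n+1},\phi]_s = 0 \qquad \forall\,\phi \in \Hst(\Omega).
\]

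\textbf{Energy estimate.} Writing $v^n = (u^{n+1}-u^n)/\tau$ and testing with $\phi = v^n$, the telescoping identity $\langle a-b,a\rangle = \tfrac12(\|a\|^2 - \|b\|^2 + \|a-b\|^2)$ applied both in $L^2(\Omega)$ and through the bilinear form $[\cdot,\cdot]_s$ yields
\[
\tfrac12\|v^n\|_{L^2}^2 + \tfrac12[u^{n+1}]_s^2 + \tfrac12\|v^n-v^{n-1}\|_{L^2}^2 + \tfrac12[u^{n+1}-u^n]_s^2 \le \tfrac12\|v^{n-1}\|_{L^2}^2 + \tfrac12[u^n]_s^2.
\]
Hence the discrete energy $E_n = \tfrac12(\|v^{n-1}\|_{L^2}^2 + [u^n]_s^2)$ is non-increasing, yielding uniform bounds $\sup_n \|v^n\|_{L^2} + \sup_n [u^n]_s \le C$ and summability of the dissipation $\sum_n(\|v^n-v^{n-1}\|^2 + [u^{n+1}-u^n]_s^2) \le C$.

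\textbf{Interpolation and compactness.} Introduce the piecewise affine interpolant $u_\tau$ on $[n\tau,(n+1)\tau]$ between $u^n$ and $u^{n+1}$, the piecewise constant velocity $v_\tau \equiv v^n$, and the piecewise constant $\bar u_\tau \equiv u^{n+1}$. The previous bounds translate into
\[
\|u_\tau\|_{L^\infty(0,T;\Hst(\Omega))} + \|\partial_t u_\tau\|_{L^\infty(0,T;L^2(\Omega))} \le C,
\]
and, reading the discrete equation against test fields $\phi \in L^1(0,T;\Hst(\Omega))$, the discrete second difference is bounded in the dual $L^\infty(0,T;H^{-s}(\Omega))$ via the identification $(\Hst(\Omega))^* = H^{-s}(\Omega)$. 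Extract a (not relabeled) subsequence with $u_\tau \weakstar u$ in $L^\infty(0,T;\Hst(\Omega))$, $\partial_t u_\tau \weakstar u_t$ in $L^\infty(0,T;L^2(\Omega))$, and a weak-$*$ limit $u_{tt}$ of the discrete second differences in $L^\infty(0,T;H^{-s}(\Omega))$.

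\textbf{Passage to the limit and initial data.} Integrating the Euler--Lagrange identity against a piecewise constant test function $\phi_\tau$ approximating a general $\phi \in L^1(0,T;\Hst(\Omega))$ and using linearity with the weak-$*$ convergences above yields the weak formulation \eqref{eq:eqweak}. The initial data are recovered from $u_\tau(0) = u_0$ and $v_\tau(0^+) = v_0$ combined with the Lipschitz-in-time estimate $\|u_\tau(t)-u_0\|_{L^2} \le t\sup_n\|v^n\|_{L^2}$ and a similar one for the $\Hst$-seminorm coming from $\sum_n [u^{n+1}-u^n]_s^2 \le C$ (giving \eqref{eq:u0free} via $[u(t)-u_0]_s^2 \le Ct$), whereas \eqref{eq:v0free} follows from the dissipation bound $\sum_n\|v^n-v^{n-1}\|^2 \le C$ telescoped near $t=0$. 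The main technical obstacle I expect is precisely this last point: the scheme is dissipative, while the target equation is conservative, so $v_\tau$ converges only weakly; one must use the summability of $\|v^n - v^{n-1}\|^2$ (which is $O(\tau)$ when summed over $n$ with the initial value fixed) to squeeze the averaged $L^2$-trace of $u_t$ back to $v_0$, which requires balancing the window size $h$ against $\tau$ before sending $\tau \to 0$.
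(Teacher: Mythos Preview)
Your scheme, energy estimate, interpolants, and compactness arguments coincide with the paper's (Propositions~\ref{prop:keyestimate}--\ref{prop:convunbar}), and the passage to the limit in the weak formulation \eqref{eq:eqweak} works as you indicate. The genuine gap is the one you yourself flag: the recovery of the initial conditions \eqref{eq:u0free}--\eqref{eq:v0free}.

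Your proposed mechanism does not close. The dissipation bound $\sum_n(\|v^n-v^{n-1}\|_{L^2}^2+[u^{n+1}-u^n]_s^2)\le C$ is $O(1)$, not $O(\tau)$; for $k\tau=t$ Cauchy--Schwarz only gives $[u^k-u_0]_s\le\sqrt{k}\sqrt{C}=\sqrt{Ct/\tau}$ and likewise $\|v^k-v_0\|_{L^2}\le\sqrt{Ct/\tau}$, which blow up as $\tau\to 0$. ``Balancing $h$ against $\tau$'' cannot help either, because the limit $u$ is obtained by first sending $\tau\to 0$; conditions \eqref{eq:u0free}--\eqref{eq:v0free} must then be verified for the fixed function $u$, with no $\tau$ left to play against. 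The only $\tau$-uniform smallness you actually have is $\|v^k-v_0\|_{H^{-s}}\le Ct$ (directly from the Euler--Lagrange identity), which is the wrong norm.

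The paper closes this differently. Once \eqref{eq:eqweak} holds, the limit equation is energy-conserving (Remark~\ref{rem:energy}). From $u_{tt}\in L^\infty(0,T;H^{-s}(\Omega))$ one gets $u_t(t_k)\to v_0$ in $H^{-s}(\Omega)$ along Lebesgue points $t_k\to 0$, and boundedness in $L^2$ plus density upgrades this to $u_t(t_k)\rightharpoonup v_0$ in $L^2(\Omega)$; similarly $u(t_k)\rightharpoonup u_0$ in $\Hst(\Omega)$. Weak lower semicontinuity gives $[u_0]_s^2\le\liminf_k[u(t_k)]_s^2$ and $\|v_0\|_{L^2}^2\le\liminf_k\|u_t(t_k)\|_{L^2}^2$, while energy conservation gives $\limsup_k\bigl([u(t_k)]_s^2+\|u_t(t_k)\|_{L^2}^2\bigr)\le[u_0]_s^2+\|v_0\|_{L^2}^2$. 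Together these force convergence of norms, hence strong convergence, which is exactly \eqref{eq:u0free}--\eqref{eq:v0free}. Replace your final paragraph with this argument.
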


The existence of a such a weak solution will be proved by means of an implicit variational scheme based on the idea of minimizing movements \cite{ambrosio1995minimizing} introduced by De Giorgi, elsewhere known also as the discrete Morse semiflow approach or Rothe's scheme \cite{Ro30}.

\subsection{Approximating scheme}
For any $n > 0$ let $\tau_n = T/n$, $u_{-1}^n = u_0 - \tau_n v_0$, and $u_0^n = u_0$ (conventionally we intend $v_0(x) = 0$ for $x \in \R^d \setminus \Omega$). For any $0 < i \leq n$, given $u^n_{i-2}$ and $u^n_{i-1}$, define
\begin{equation}\label{eq:scheme}
u_i^n = \arg \min_{u \in \Hst(\Omega)} J_i^n(u) = \arg \min_{u \in \Hst(\Omega)} \left[ \int_\Omega \frac{\abs{u-2u_{i-1}^n+u_{i-2}^n}^2}{2\tau_n^2}\,dx + \frac12 [u]_s^2 \right].
\end{equation}
Each $u^i_n$ is well defined: indeed, existence of a minimizer can be obtained via the direct method of the calculus of variations while uniqueness follows from the strict convexity of the functional $J_i^n$. Each minimizer $u_i^n$ can be characterize in the following way: take any test function $\phi \in \Hst(\Omega)$, then, by minimality of $u_i^n$ in $\Hst(\Omega)$, one has
\[
\frac{d}{d\varepsilon} J_i^n(u_i^n+\varepsilon \phi) |_{\varepsilon=0} = 0,
\]
which rewrites as
\begin{equation}\label{eq:ELnoobstacle}
\int_{\Omega} \frac{u_i^n-2u_{i-1}^n+u_{i-2}^n}{\tau_n^2}\,\phi \,dx + [u_i^n , \phi]_{s} = 0 \quad \text{for all } \phi \in \Hst(\Omega).
\end{equation}
We define the piecewise constant and piecewise linear interpolation in time of the sequence $\{u_i^n\}_i$ over $[-\tau_n,T]$ as follows: let $t_i^n = i\tau_n$, then the piecewise constant interpolant is given by
\begin{equation}\label{eq:uhbar}
\bar{u}^n(t,x) =
\begin{system}
& u_{-1}^n(x) &\quad &t=-\tau_n             \\
& u_i^n(x) &\quad &t \in (t_{i-1}^n,t_i^n], \\
\end{system}
\end{equation}
and the piecewise linear one by
\begin{equation}\label{eq:uh}
u^n(t,x) =
\begin{system}
& u_{-1}^n(x)                                              					&\quad &t=-\tau_n                   \\
& \frac{t-t_{i-1}^n}{\tau_n}u_i^n(x) + \frac{t_i^n-t}{\tau_n}u_{i-1}^n(x) 	&\quad &t \in (t_{i-1}^n,t_i^n].  	\\
\end{system}
\end{equation}
Define $v_i^n = (u_i^n-u_{i-1}^n)/\tau_n$, $0\leq i \leq n$, and let $v^n$ be the piecewise linear interpolation over $[0,T]$ of the family $\{v_i^n\}_{i=0}^n$, defined similarly to \eqref{eq:uh}. Taking the variational characterization \eqref{eq:ELnoobstacle} and integrating over $[0,T]$ we obtain
\[
\int_{0}^T \int_\Omega \left( \frac{u^n_t(t) - u^n_t(t-\tau_n)}{\tau_n} \right) \phi(t) \,dxdt + \int_{0}^T [ \bar{u}^n(t), \phi(t) ]_{s} \, dt = 0
\]
for all $\phi \in L^1(0,T;\Hst(\Omega))$, or equivalently
\begin{equation}\label{eq:ELn}
\int_{0}^T \int_\Omega v_t^n(t) \phi(t) \,dxdt + \int_{0}^T [ \bar{u}^n(t), \phi(t) ]_{s} \, dt = 0.
\end{equation}
The idea is now to pass to the limit $n \to \infty$ and prove, using \eqref{eq:ELn}, that the approximations $u^n$ and $\bar{u}^n$ converge to a weak solution $u$ of \eqref{eq:freewaves}. For doing so the main tool is the following estimate.
\begin{prop}[Key estimate]\label{prop:keyestimate}
	The approximate solutions $\bar{u}^n$ and $u^n$ satisfy
	\[
	\norm{ u_t^n(t) }_{ L^2(\Omega) }^{ 2 } + [ \bar{u}^n(t) ]_{s}^{ 2 } \leq C(u_0, v_0)
	\]
	for all $t \in [0,T]$, with $C(u_0, v_0)$ a constant independent of $n$.
\end{prop}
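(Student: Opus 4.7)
The idea is to mimic, at the discrete level, the usual energy conservation argument for the wave equation: test against the discrete velocity and recognise a telescoping structure.

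Concretely, I plan to test the Euler-Lagrange identity \eqref{eq:ELnoobstacle} with the admissible choice $\phi = u_i^n - u_{i-1}^n = \tau_n v_i^n \in \Hst(\Omega)$. Since
\[
\frac{u_i^n - 2u_{i-1}^n + u_{i-2}^n}{\tau_n^2} = \frac{v_i^n - v_{i-1}^n}{\tau_n},
\]
the $L^2$ term becomes $\int_\Omega (v_i^n - v_{i-1}^n)\, v_i^n\,dx$, while the bilinear part reads $[u_i^n, u_i^n - u_{i-1}^n]_s$.

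Next, I would apply the elementary polarisation identity $(a-b)\cdot a = \tfrac12 a^2 - \tfrac12 b^2 + \tfrac12 (a-b)^2$ to each of the two pieces (the second identity is valid because $[\cdot,\cdot]_s$ is a symmetric bilinear form on $\Hst(\Omega)$). This yields the discrete energy balance
\[
\tfrac12\|v_i^n\|_{L^2}^2 + \tfrac12[u_i^n]_s^2 + \tfrac12\|v_i^n-v_{i-1}^n\|_{L^2}^2 + \tfrac12[u_i^n-u_{i-1}^n]_s^2 = \tfrac12\|v_{i-1}^n\|_{L^2}^2 + \tfrac12[u_{i-1}^n]_s^2,
\]
from which the monotonicity
\[
E_i^n := \tfrac12\|v_i^n\|_{L^2}^2 + \tfrac12[u_i^n]_s^2 \;\leq\; E_{i-1}^n
\]
follows at once. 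Iterating and using the initialisation $u_0^n = u_0$, $u_{-1}^n = u_0 - \tau_n v_0$ (so that $v_0^n = v_0$), I obtain $E_i^n \leq \tfrac12\|v_0\|_{L^2}^2 + \tfrac12[u_0]_s^2$ for every $0 \leq i \leq n$, a constant depending only on the initial data.

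Finally, I would translate this grid-point bound to a bound valid for every $t \in [0,T]$: from the definitions \eqref{eq:uhbar}--\eqref{eq:uh}, on each interval $(t_{i-1}^n,t_i^n]$ one has $\bar u^n(t) = u_i^n$ and $u_t^n(t) = v_i^n$, whence
\[
\|u_t^n(t)\|_{L^2(\Omega)}^2 + [\bar u^n(t)]_s^2 = \|v_i^n\|_{L^2(\Omega)}^2 + [u_i^n]_s^2 \leq \|v_0\|_{L^2(\Omega)}^2 + [u_0]_s^2.
\]
The only subtle point is the discrete polarisation trick on the fractional bilinear form, which is really just the parallelogram law for the Hilbert norm $[\cdot]_s$; everything else is a book-keeping exercise, and the non-negativity of the two quadratic correction terms accounts (as expected) for the numerical dissipation introduced by the implicit scheme.
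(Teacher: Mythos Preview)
Your argument is correct and follows essentially the same route as the paper: test \eqref{eq:ELnoobstacle} with the discrete velocity increment, recognise the telescoping energy structure, and iterate. The only cosmetic difference is that the paper tests with $\phi = u_{i-1}^n - u_i^n$ and uses the inequality $b(a-b)\le \tfrac12(a^2-b^2)$ directly, whereas you use the full polarisation identity and keep the two nonnegative remainder terms explicit; this makes the numerical dissipation visible but does not change the proof in any substantive way.
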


\begin{proof}
	For each fixed $i \in \{1,\dots,n\}$ consider equation \eqref{eq:ELnoobstacle} with $\phi = u_{i-1}^n-u_i^n$, so that we have
	\[
	\begin{aligned}
	0 &= \int_\Omega \frac{(u_{i}^n-2u_{i-1}^n+u_{i-2}^n)(u_{i-1}^n-u_{i}^n)}{\tau_n^2}\,dx + [u_i^n, u_{i-1}^n - u_i^n]_{s} \\
	&\leq \frac{1}{2\tau_n^2} \int_\Omega (u_{i-1}^n-u_{i-2}^n)^2 - (u_{i}^n-u_{i-1}^n)^2 \, dx + \frac{1}{2} ( [u_{i-1}^n]_{s}^{ 2 } - [u_i^n]_{s}^{ 2 } ),
	\end{aligned}
	\]
	where we use the fact that $b(a-b) \leq \frac12 (a^2-b^2)$.	Summing for $i = 1,\dots,k$, with $1 \leq k \leq n$, we get
	\[
	\begin{aligned}
	\left\lVert\frac{u_k^n-u_{k-1}^n}{\tau_n}\right\rVert_{ L^2(\Omega) }^{ 2 } + [u_k^n]_{s}^{ 2 } &\leq \frac{1}{\tau_n^2} \norm{ u_0-u_{-1}^n }_{ L^2(\Omega) }^{ 2 } + [u_0]_{s}^{ 2 } \\
	&= ||v_0||_{L^2(\Omega)}^2 + [u_0]_{s}^{ 2 }.
	\end{aligned}
	\]
	The result follows by the very definition of $u^n$ and $\bar{u}^n$.
\end{proof}

\begin{remark}\label{rem:energy}{\rm
	Given a weak solution $u$ of \eqref{eq:freewaves} we can speak of the energy quantity
	\[
	E(t) = ||u_t(t)||_{L^2(\Omega)}^2 + [u(t)]_s^2.
	\]
	One can easily see by an approximation argument that $E$ is conserved throughout the evolution and, as a by-product of the last proof, we see that also the energy of our approximations is at least non-increasing, i.e. $E_i^n \leq E_{i-1}^n$, where $E_i^n = E(u^n(t_i^n)) = ||v_i^n||_{L^2(\Omega)}^2 + [u_i^n]_{s}^2$. Furthermore we also remark that we cannot improve this estimate, meaning that generally speaking the given approximations $u^n$ are not energy preserving.
}
\end{remark}

Thanks to Proposition \ref{prop:keyestimate}, we can now prove convergence of the $u^n$.
\begin{prop}[Convergence of $u^n$]\label{prop:convun}
	There exists a subsequence of steps $\tau_n \to 0$ and a function $u \in L^\infty(0,T;\Hst(\Omega)) \cap W^{1,\infty}(0,T;L^2(\Omega))$, with $u_{tt} \in L^\infty(0,T;H^{-s}(\Omega))$, such that
	\[
	\begin{aligned}
	&u^n \to u &\text{ in }& C^0([0,T];L^2(\Omega)) \\
	&u_t^n \rightharpoonup^* u_t &\text{ in }& L^\infty(0,T;L^2(\Omega)) \\
	&u^n(t) \rightharpoonup u(t) &\text{ in }& \Hst(\Omega) \text{ for any } t \in [0,T]. \\
	\end{aligned}
	\]
\end{prop}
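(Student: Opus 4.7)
The plan is to extract a convergent subsequence using the uniform bounds provided by the key estimate, combined with a standard Arzelà-Ascoli argument exploiting the compact embedding $\tilde H^s(\Omega) \hookrightarrow L^2(\Omega)$, and then to identify the limit in the various required topologies by uniqueness arguments.

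First, from Proposition \ref{prop:keyestimate} we read off that $u_t^n$ is bounded in $L^\infty(0,T;L^2(\Omega))$ and that $\bar u^n$ is bounded in $L^\infty(0,T;\tilde H^s(\Omega))$. A direct comparison between $u^n$ and $\bar u^n$ (they differ by convex combinations of consecutive $u_i^n$'s) shows that the same bound holds for $u^n$ in $L^\infty(0,T;\tilde H^s(\Omega))$. By Banach--Alaoglu, a subsequence of $u_t^n$ converges weakly-$*$ in $L^\infty(0,T;L^2(\Omega))$ to some $v$, and for each fixed $t$ a subsequence of $u^n(t)$ converges weakly in $\tilde H^s(\Omega)$.

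Second, I would upgrade the time regularity via Arzelà--Ascoli in $C^0([0,T];L^2(\Omega))$. The family $\{u^n\}$ is uniformly equi-Lipschitz in time with values in $L^2(\Omega)$, since $\|u^n(t)-u^n(s)\|_{L^2(\Omega)}\le \|u^n_t\|_{L^\infty(0,T;L^2(\Omega))} |t-s|\le C|t-s|$; and pointwise in $t$ it is relatively compact in $L^2(\Omega)$, because it is bounded in $\tilde H^s(\Omega)$ which embeds compactly into $L^2(\Omega)$. A diagonal extraction yields a subsequence (not relabelled) converging in $C^0([0,T];L^2(\Omega))$ to some $u$, which a posteriori inherits the $L^\infty(0,T;\tilde H^s(\Omega))$ bound by lower semicontinuity of the seminorm under weak convergence.

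Third, I would identify the limits. Testing the distributional identity $\int_0^T\!\int_\Omega u^n_t \phi\,\psi'\,dx\,dt = -\int_0^T\!\int_\Omega u^n \phi\,\psi\,dx\,dt$ with $\phi\in L^2(\Omega)$ and $\psi\in C^\infty_c((0,T))$, and using the weak-$*$ convergence of $u_t^n$ together with the strong convergence of $u^n$ in $C^0([0,T];L^2(\Omega))$, one identifies $v=u_t$, so $u\in W^{1,\infty}(0,T;L^2(\Omega))$. For the pointwise weak convergence in $\tilde H^s(\Omega)$, the bound $\|u^n(t)\|_s\le C$ means that every subsequence has a further weakly convergent subsequence in $\tilde H^s(\Omega)$; the limit must coincide with $u(t)$ by the already-established $L^2$ convergence, so the full sequence $u^n(t)$ converges weakly to $u(t)$ for each $t$. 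Finally, $u_{tt}\in L^\infty(0,T;H^{-s}(\Omega))$ follows from \eqref{eq:ELn}: since $\bar u^n$ is bounded in $L^\infty(0,T;\tilde H^s(\Omega))$, the distributional relation $v_t^n=-(-\Delta)^s\bar u^n$ gives a uniform $H^{-s}$-bound on $v_t^n=u_{tt}^n$, which is inherited by $u_{tt}$ in the limit.

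The only genuinely delicate point is the Arzelà--Ascoli step producing convergence in $C^0([0,T];L^2(\Omega))$, since it is what bridges the weak bounds into a strong notion of convergence and makes the identification $v=u_t$ possible; everything else is a standard compactness-plus-uniqueness routine.
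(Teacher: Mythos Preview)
Your overall strategy coincides with the paper's: uniform bounds from the key estimate, Arzel\`a--Ascoli in $C^0([0,T];L^2(\Omega))$ via equicontinuity and the compact embedding $\tilde H^s(\Omega)\hookrightarrow L^2(\Omega)$, and then identification of limits. The paper is slightly less explicit about the Arzel\`a--Ascoli step and only records $\tfrac12$-H\"older equicontinuity, while you note the Lipschitz bound; either suffices.

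There is one concrete slip in your last paragraph. You write that \eqref{eq:ELn} gives a uniform $H^{-s}$-bound on ``$v_t^n=u_{tt}^n$''. But $v^n$ is the \emph{piecewise linear} interpolant of the velocities $v_i^n$, whereas $u_t^n$ is \emph{piecewise constant} (since $u^n$ is piecewise linear), so $u_{tt}^n$ is not a function at all---it is a sum of Diracs in time---and the equality $v_t^n=u_{tt}^n$ is false. The bound you correctly obtain from \eqref{eq:ELn} is on $v_t^n$, and to transfer it to $u_{tt}$ you must first show that $v^n$ and $u_t^n$ have the same limit $u_t$. This is exactly the step the paper carries out: for $t\in(t_{i-1}^n,t_i^n]$ one has $u_t^n(t)=v^n(t_i^n)$, hence
\[
\int_\Omega (u_t^n(t)-v^n(t))\phi\,dx=\int_\Omega\!\!\int_t^{t_i^n} v_t^n(s)\,ds\,\phi\,dx
\le \tau_n\,\|v_t^n\|_{L^\infty(0,T;H^{-s}(\Omega))}\,\|\phi\|_{H^s(\R^d)},
\]
so the weak-$*$ limits of $u_t^n$ and $v^n$ coincide, $v=u_t$, and then $u_{tt}=v_t\in L^\infty(0,T;H^{-s}(\Omega))$ follows from the uniform bound on $v_t^n$. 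Once you insert this link, your argument is complete and matches the paper's.
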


\begin{proof}
	
	From Proposition \ref{prop:keyestimate} it follows that
	\begin{equation}
	u_t^n(t) \text{ and } v^n(t) \text{ are bounded in } L^2(\Omega) \text{ uniformly in $t$ and $n$,}
	\end{equation}
	\begin{equation}\label{eq:boundhsdot}
	u^n(t) \text{ is bounded in the } [\cdot]_s \text{ semi-norm uniformly in $t$ and $n$.}
	\end{equation}
	Observe now that $u^n(\cdot,x)$ is absolutely continuous on $[0,T]$; thus, for all $t_1, t_2 \in [0,T]$ with $t_1 < t_2$, we have
	\[
	\begin{aligned}
	||u^n(t_2,\cdot) - u^n(t_1,\cdot)||_{L^2(\Omega)} &= \left( \int_{\Omega} \left( \int_{t_1}^{t_2} u_t^n(t,x) \,dt \right)^2\,dx  \right)^\frac12 \\
	&\leq \left( \int_{t_1}^{t_2} ||u_t^n(t,\cdot)||_{L^2(\Omega)}^2 \,dt \right)^\frac12 (t_2-t_1)^\frac12 \leq C(t_2-t_1)^\frac12,
	\end{aligned}
	\]
	where we made use of the H\"{o}lder's inequality and of Fubini's Theorem.
	This estimate yields
	\begin{equation}\label{eq:boundl2}
	u^n(t) \text{ is bounded in } L^2(\Omega) \text{ uniformly in $t$ and $n$},
	\end{equation}
	\begin{equation}\label{eq:equicont}
	u^n \text{ is equicontinuous in } C^0([0,T];L^2(\Omega)).
	\end{equation}
	From \eqref{eq:ELn}, using \eqref{eq:boundl2} and \eqref{eq:boundhsdot}, we can also deduce that $v_t^n(t)$ is bounded in $H^{-s}(\Omega)$ uniformly in $t$ and $n$. All together we have 
	\begin{equation}\label{eq:unbounds}
	u^n \text{ is bounded in } W^{1,\infty} (0,T;L^2(\Omega)) \text{ and in } L^\infty(0,T;\Hst(\Omega)),
	\end{equation}
	\begin{equation}\label{eq:vnbounds}
	v^n \text{ is bounded in } L^\infty(0,T;L^2(\Omega)) \text{ and in } W^{1,\infty}(0,T;H^{-s}(\Omega)).
	\end{equation}
	Thanks to \eqref{eq:equicont}, \eqref{eq:unbounds} and \eqref{eq:vnbounds} there exists a function $u \in L^\infty(0,T;\Hst(\Omega)) \cap W^{1,\infty}(0,T;L^2(\Omega)) \cap C^0([0,T];L^2(\Omega))$ such that
	\[
	\begin{aligned}
		&u^n \to u &\text{ in }& C^0([0,T];L^2(\Omega)) \\
		&u_t^n \rightharpoonup^* u_t &\text{ in }& L^\infty(0,T;L^2(\Omega)) \\
		&u^n(t) \rightharpoonup u(t) &\text{ in }& \Hst(\Omega) \text{ for any } t \in [0,T] \\
	\end{aligned}
	\]
	and there exists $v \in W^{1,\infty}(0,T;H^{-s}(\Omega))$ such that
	\[
	v^n \rightharpoonup^* v \text{ in } L^\infty(0,T;L^2(\Omega)) \quad\text{and}\quad v^n \rightharpoonup^* v \text{ in } W^{1,\infty}(0,T;H^{-s}(\Omega)).
	\]
	
	As one would expect $v(t) = u_t(t)$ as elements of $L^2(\Omega)$ for a.e. $t \in [0,T]$: indeed, for $t \in (t_{i-1}^n,t_i^n]$ and $\phi \in \Hst(\Omega)$, we have by construction $u_t^n(t) = v^n(t^n_i)$, and so
	\[
	\begin{aligned}
		\int_{\Omega} (u_t^n(t) - v^n(t))\phi\,dx &= \int_{\Omega} (v^n(t_i^n) - v^n(t))\phi\,dx = \int_{\Omega} \left(\int_{t}^{t^n_i} v_t^n(s)\,ds\right)\phi\,dx \\
		&\leq \tau_n ||v_t^n||_{L^\infty(0,T;H^{-s}(\Omega))} ||\phi||_{H^s(\R^d)}
	\end{aligned}
	\]
	which implies, for any $\psi(t,x) = \phi(x)\eta(t)$ with $\phi \in \Hst(\Omega)$ and $\eta \in C^1_0([0,T])$, that
	\[
	\begin{aligned}
	&\int_0^T \left[ \int_\Omega (u_t(t)-v(t))\phi\,dx\right]\eta(t) \,dt = \int_0^T\int_\Omega (u_t(t)-v(t))\psi \,dxdt \\
	&= \lim_{n\to\infty} \int_0^T\int_\Omega (u^n_t(t)-v^n(t))\psi \,dxdt = \lim_{n\to\infty} \int_0^T\left[\int_\Omega (u^n_t(t)-v^n(t))\phi\,dx\right]\eta(t)\,dt \\
	&\leq \lim_{n\to\infty} \tau_n T ||v_t^n||_{L^\infty(0,T;H^{-s}(\Omega))} ||\phi||_{H^s(\R^d)} ||\eta||_{\infty} = 0.
	\end{aligned}
	\]
	Hence we have
	\[
	\int_\Omega (u_t(t)-v(t))\phi\,dx = 0 \quad \text{ for all }\phi\in \Hst(\Omega) \text{ and a.e. } t \in [0,T],
	\]
	which yields the sought for conclusion. Thus, $v_t = u_{tt}$ and
	$
	u_{tt} \in L^\infty(0,T;H^{-s}(\Omega)).
	$
\end{proof}

\begin{prop}[Convergence of $\bar{u}^n$]\label{prop:convunbar}
	Let $u$ be the limit function obtained in Proposition \ref{prop:convun}, then
	\[
	\bar{u}^n \weakstar u \text{ in } L^\infty(0,T;\Hst(\Omega)).
	\]
\end{prop}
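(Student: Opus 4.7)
The plan is to extract a weak-$*$ limit of $\bar{u}^n$ via a Banach--Alaoglu argument and then identify it with the function $u$ obtained in Proposition \ref{prop:convun}, exploiting the fact that $\bar{u}^n$ and $u^n$ differ by a quantity controlled by the time step $\tau_n$.

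First, Proposition \ref{prop:keyestimate} yields the uniform bound $[\bar{u}^n(t)]_s \leq C(u_0,v_0)$ for every $t \in [0,T]$, so that $\bar{u}^n$ is bounded in $L^\infty(0,T;\Hst(\Omega))$. Since $\Hst(\Omega)$ is a separable Hilbert space, its dual $H^{-s}(\Omega)$ is separable as well, and hence so is $L^1(0,T;H^{-s}(\Omega))$. By the sequential Banach--Alaoglu theorem, up to a further subsequence (extracted from the one of Proposition \ref{prop:convun}) there exists $\bar{u} \in L^\infty(0,T;\Hst(\Omega))$ such that $\bar{u}^n \weakstar \bar{u}$.

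To identify the limit, observe that by construction, for $t \in (t_{i-1}^n, t_i^n]$,
\[
\bar{u}^n(t) - u^n(t) = u_i^n - \left[\frac{t-t_{i-1}^n}{\tau_n} u_i^n + \frac{t_i^n - t}{\tau_n} u_{i-1}^n\right] = (t_i^n - t)\, v_i^n,
\]
so that $\|\bar{u}^n(t) - u^n(t)\|_{L^2(\Omega)} \leq \tau_n \|v_i^n\|_{L^2(\Omega)} \leq C\tau_n$ uniformly in $t$, by the key estimate. Combined with the convergence $u^n \to u$ in $C^0([0,T];L^2(\Omega))$ from Proposition \ref{prop:convun}, this gives $\bar{u}^n \to u$ in $L^\infty(0,T;L^2(\Omega))$. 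Testing the weak-$*$ convergence $\bar{u}^n \weakstar \bar{u}$ against functions of the form $f \in L^1(0,T;L^2(\Omega)) \hookrightarrow L^1(0,T;H^{-s}(\Omega))$ (the embedding being dual to $\Hst(\Omega) \hookrightarrow L^2(\Omega)$), we obtain $\int_0^T \int_\Omega \bar{u} f\,dx\,dt = \int_0^T \int_\Omega u f\,dx\,dt$ for every such $f$, whence $\bar{u} = u$ almost everywhere. Uniqueness of the weak-$*$ limit then promotes the convergence to the whole subsequence fixed in Proposition \ref{prop:convun}.

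There is no real obstacle here: the only mildly delicate points are the invocation of sequential weak-$*$ compactness (which requires the separability of $L^1(0,T;H^{-s}(\Omega))$) and the fact that the natural pairing between $L^\infty(0,T;\Hst(\Omega))$ and $L^1(0,T;L^2(\Omega))$ is consistent with the integral pairing against the $L^\infty(0,T;L^2(\Omega))$ limit. The essential content of the proposition is the elementary estimate $\|\bar{u}^n - u^n\|_{L^\infty(0,T;L^2(\Omega))} = O(\tau_n)$, which pins down the two limits.
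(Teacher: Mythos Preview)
Your proof is correct and follows essentially the same route as the paper: both arguments hinge on the elementary estimate $\|\bar u^n(t)-u^n(t)\|_{L^2(\Omega)}\le C\tau_n$ together with the uniform $\Hst(\Omega)$ bound from the key estimate, and then conclude weak-$*$ convergence to $u$. The only difference is presentational: the paper states the conclusion directly after noting strong $L^\infty(0,T;L^2(\Omega))$ convergence and uniform $\Hst$ boundedness, whereas you spell out the Banach--Alaoglu extraction and the identification of the limit via the $L^1(0,T;L^2(\Omega))$ pairing.
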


\begin{proof}
	By definition we have
	\[
	\begin{aligned}
	&\sup_{t \in [0,T]} \int_\Omega \abs{u^n(t,x)-\bar{u}^n(t,x)}^2 \, dx = \sum_{i=1}^n \sup_{t \in [t_{i-1}^n,t_i^n]} (t-t_i^n)^2 \int_{\Omega} (v_i^n)^2\,dx \\
	&\leq \tau_n^2 \sum_{i=1}^{n} ||v_i^n||_{L^2(\Omega)}^2 \leq C\tau_n
	\end{aligned}
	\]
	which implies $\bar{u}^n \to u$ in $L^\infty(0,T;L^2(\Omega))$. Furthermore, taking into account Proposition \ref{prop:keyestimate}, $\bar{u}^n(t)$ is bounded in $\Hst(\Omega)$ uniformly in $t$ and $n$, so that we have $\bar{u}^n \weakstar u$ in $L^\infty(0,T;\Hst(\Omega))$ and, as it happens for $u^n$, $\bar{u}^n(t) \weak u(t)$ in $\Hst(\Omega)$ for any $t \in [0,T]$.
\end{proof}

We can now pass to the limit in \eqref{eq:ELn} to prove $u$ to be a weak solution, thus proving Theorem \ref{thm:main1}.

\begin{proof}[Proof of Theorem \ref{thm:main1}]
	The limit function $u$ obtained in Proposition \ref{prop:convun} is a weak solution of \eqref{eq:freewaves}. Indeed, for each $n>0$, by \eqref{eq:ELn} one has
	\[
	\int_{0}^T \int_\Omega v_t^n(t) \phi(t) \,dxdt + \int_{0}^T [ \bar{u}^n(t), \phi(t) ]_{s} \, dt = 0
	\]
	for any $\phi \in L^1(0,T;\Hst(\Omega))$. Passing to the limit as $n \to \infty$, using Propositions \ref{prop:convun} and \ref{prop:convunbar}, we immediately get
	\[
	\int_{0}^T \int_\Omega u_{tt}(t) \phi(t) \,dxdt + \int_{0}^T [ u(t), \phi(t) ]_{s} \, dt = 0.
	\]
	Regarding the initial conditions \eqref{eq:u0free} and \eqref{eq:v0free} it suffices to prove that, if $t_k \to 0$ are Lebesgue points for both $t \mapsto ||u_t(t)||_{L^2(\Omega)}^2$ and $t \mapsto [u(t)]_{s}^2$, then
	\begin{equation}\label{eq:normconv}
	[u(t_k)]_{s}^2 \to [u_0]_{s}^2 \quad \text{and} \quad ||u_t(t_k)||_{L^2(\Omega)}^2\to ||v_0||_{L^2(\Omega)}^2.
	\end{equation}
	From the fact that $u_t \in W^{1,\infty}(0,T;H^{-s}(\Omega))$ we have $u_t(t_k) \to v_0$ in $H^{-s}(\Omega)$ and, since $u_t(t_k)$ is bounded in $L^2(\Omega)$ and $\Hst(\Omega) \subset L^2(\Omega)$ is dense, we also have $u_t(t_k) \weak v_0$ in $L^2(\Omega)$. On the other hand $u(t_k) \to u(0) = u_0$ strongly in $L^2(\Omega)$ because $u \in C^0([0,T];L^2(\Omega))$ and, being $u(t_k)$ bounded in $\Hst(\Omega)$, $u(t_k) \weak u(0)$ in $\Hst(\Omega)$ and $[u_0]_{s} \leq \liminf_k [u(t_k)]_{s}$. To prove \eqref{eq:normconv} it suffices to observe that
	\[
	\limsup_{k\to\infty} \left([u(t_k)]_{s}^2 + ||u_t(t_k)||_{L^2(\Omega)}^2\right) \leq [u_0]_{s}^2 + ||v_0||_{L^2(\Omega)}^2
	\]
	by energy conservation.
\end{proof}

\section{The obstacle problem}\label{sec:obstacle}

In this section we switch our focus to hyperbolic obstacle problems for the fractional Laplacian. We will see how a weak solution can be obtained by means of a slight modification of the previously presented scheme, whose core idea has already been used in other obstacle type problems (for example in \cite{Ki09, NoOk15}).

As above, let $\Omega \subset \R^d$ be an open bounded domain with Lipschitz boundary and consider $g \colon \Omega \to \R$, with
%\footnote{Fix the continuity requirements for $g$ and is $<0$ really necessary?}
\[
g \in C^0(\bar{\Omega}), \quad g<0 \text{ on } \partial \Omega.
\]
We are still interested in a non-local wave type dynamic like the one of equation \eqref{eq:freewaves}, where now we require the solution $u$ to lay above $g$: this way $g$ can be interpreted as a physical obstacle that our solution cannot go below. Consider then an initial datum
\[
u_0 \in \Hst(\Omega), \quad u_0 \geq g \text{ a.e. in } \Omega,
\]
and $v_0 \in L^2(\Omega)$.
Equation \eqref{eq:freewaves}, with the addition of the obstacle $g$, reads as follows: find a function $u = u(t,x)$ such that
\begin{equation}\label{eq:obstaclewaves}
\begin{system}
& u_{tt} + (-\Delta)^s u \geq 0             &\quad&\text{in } (0,T) \times \Omega 						\\
& u(t,\cdot) \geq g                         &\quad&\text{in } [0,T] \times \Omega                       \\
& (u_{tt} + (-\Delta)^s u)(u-g) = 0         &\quad&\text{in } (0,T) \times \Omega 						\\
& u(t,x) = 0                                &\quad&\text{in } [0,T] \times (\R^d \setminus \Omega)      \\
& u(0,x) = u_0(x)                           &\quad&\text{in } \Omega                                    \\
& u_t(0,x) = v_0(x)                         &\quad&\text{in } \Omega                                    \\
\end{system}
\end{equation}
In this system the function $u$ is required to be an obstacle-free solution whenever away from the obstacle, where $u-g>0$, while we only require a variational inequality (first line) when $u$ touches $g$. The main difficulty in \eqref{eq:obstaclewaves} is the treatment of contact times: the previous system does not specify what kind of behavior arises at contact times, leaving us free to choose between ``bouncing'' solutions, the profile hits the obstacle and bounces back with a fraction of the previous velocity (see e.g. \cite{PaoliSchatzman02I}), and an ``adherent'' solution, the profile hits the obstacle and stops (this way we dissipate energy). The definition of weak solution we are going to consider includes both of these cases.
\begin{definition}\label{def:weakobst}
We say a function $u = u(t,x)$ is a weak solution of \eqref{eq:obstaclewaves} if
\begin{enumerate}
	\item $u \in L^\infty(0,T; \Hst(\Omega)) \cap W^{1,\infty}(0,T;L^2(\Omega))$ and $u(t,x) \geq g(x)$ for a.e. $(t,x) \in (0,T)\times \Omega$;
	\item there exist weak left and right derivatives $u_t^{\pm}$ on $[0,T]$ (with appropriate modifications at endpoints);
%	 \textcolor{red}{dopo va detto esplicitamente che questo vale}
	\item for all $\phi \in W^{1,\infty}(0,T;L^2(\Omega)) \cap L^1(0,T;\Hst(\Omega))$ with $\phi \geq 0$, $\text{spt}\,\phi \subset [0,T)$, we have
	\[
	-\int_{0}^{T} \int_{\Omega} u_t\phi_t \, dxdt + \int_{0}^{T} [u, \phi]_{s} \, dt - \int_\Omega v_0\,\phi(0) \, dx \geq 0
	\]
	\item the initial conditions are satisfied in the following sense
	\[
	u(0,\cdot) = u_0, \quad \int_\Omega (u_t^+(0)-v_0)(\phi-u_0) \, dx \geq 0 \quad \forall \phi \in \Hst(\Omega), \phi \geq g.
	\]
\end{enumerate}
\end{definition}

\medskip
\noindent Within this framework we can partially extend the construction presented in the previous section so as to prove existence of a weak solution.

\begin{theorem}\label{thm:main2}
	There exists a weak solution $u$ of the hyperbolic obstacle problem \eqref{eq:obstaclewaves}, and $u$ satisfies the energy inequality
	\begin{equation}\label{eq:eneine}
	||u_t^\pm(t)||_{L^2(\Omega)}^2 + [u(t)]_{s}^2 \leq ||v_0||_{L^2(\Omega)}^2 + [u_0]_{s}^2 \quad \text{for a.e. }t\in[0,T].
	\end{equation}
\end{theorem}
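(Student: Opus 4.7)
The strategy mimics Section \ref{sec:free}, now with minimization constrained to the closed convex set $K := \{u \in \Hst(\Omega) : u \geq g \text{ a.e.}\}$, which is nonempty since $u_0 \in K$. For $i = 1,\dots,n$ I set
\[
u_i^n = \arg\min_{u \in K} J_i^n(u),
\]
with $J_i^n$ as in \eqref{eq:scheme}; existence and uniqueness follow from the direct method and strict convexity. The optimality condition is now a variational inequality,
\[
\int_\Omega \frac{u_i^n - 2u_{i-1}^n + u_{i-2}^n}{\tau_n^2}(\phi - u_i^n)\,dx + [u_i^n, \phi - u_i^n]_s \geq 0 \quad \forall \phi \in K,
\]
and, since $u_i^n + \psi \in K$ for every $\psi \in \Hst(\Omega)$ with $\psi \geq 0$, it admits the handier ``one-sided'' form with nonnegative test function $\psi$ in place of $\phi-u_i^n$ (and $\geq$ replacing the $=$ of the free case).

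The key estimate of Proposition \ref{prop:keyestimate} survives this modification: taking $\phi = u_{i-1}^n \in K$ in the variational inequality reproduces the same chain of computations, with the opening equality replaced by $\leq$. Summing still yields
\[
\|v_k^n\|_{L^2(\Omega)}^2 + [u_k^n]_s^2 \leq \|v_0\|_{L^2(\Omega)}^2 + [u_0]_s^2,
\]
so the discrete energy is monotonically non-increasing. The compactness arguments of Propositions \ref{prop:convun} and \ref{prop:convunbar} then go through unchanged, producing a limit $u$ in the required spaces; since $K$ is closed under weak $L^2$-convergence, $u(t,\cdot) \geq g$ a.e.\ in $\Omega$ for every $t$, giving item 1 of Definition \ref{def:weakobst}.

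For the weak variational formulation (item 3), I would take a non-negative $\phi$ with $\text{spt}\,\phi \subset [0,T)$, discretize it as $\phi_i^n \in \Hst(\Omega)$, $\phi_i^n \geq 0$ (e.g.\ via time-averages), apply the one-sided variational inequality with test $\psi = \phi_i^n$, multiply by $\tau_n$, and sum over $i$. Discrete integration by parts in time converts the acceleration term into $-\sum_i \tau_n \int_\Omega v_{i-1}^n \frac{\phi_i^n - \phi_{i-1}^n}{\tau_n}\,dx$ plus the boundary value $-\int_\Omega v_0 \phi_1^n\,dx$, the boundary contribution at $t=T$ vanishing by the support assumption. Passing to the limit via $v^n \weakstar u_t$, $\bar u^n \weakstar u$ and the Lipschitz-in-time regularity of $\phi$ yields item 3. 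The initial conditions (item 4) come from the $i=1$ instance of the variational inequality: substituting $u_{-1}^n = u_0 - \tau_n v_0$ recasts it as
\[
\int_\Omega (v_1^n - v_0)(\phi - u_1^n)\,dx + \tau_n[u_1^n, \phi - u_1^n]_s \geq 0 \quad \forall \phi \in K,
\]
and as $n \to \infty$ the second term vanishes while $u_1^n \to u_0$ in $L^2$ and $v_1^n$ converges weakly in $L^2$ to a candidate for $u_t^+(0)$.

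Finally, the energy inequality \eqref{eq:eneine} follows from the uniform discrete bound by applying weak lower semicontinuity of $\|\cdot\|_{L^2(\Omega)}$ and $[\cdot]_s$ at Lebesgue times. The main obstacle---and the essential new feature compared with Section \ref{sec:free}---is the interpretation and construction of the one-sided weak derivatives $u_t^\pm$: in contrast to the unconstrained setting, $u_t$ may jump at contact times, so one cannot expect $u_{tt} \in L^\infty(0,T;H^{-s}(\Omega))$. Their existence (and the separate validity of \eqref{eq:eneine} for each) must be extracted from the monotonicity of the discrete energy, promoting $u_t$ to a BV-in-time object via uniform estimates on $v^n$ inherited from the variational inequality.
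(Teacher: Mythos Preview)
Your overall strategy matches the paper's, and the treatment of the scheme, the key energy estimate, the compactness, and item~3 are all essentially what the paper does. Two points, however, are genuine gaps rather than details left to the reader.

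\textbf{The initial condition on the velocity (item 4).} Your proposal to use only the $i=1$ instance of the variational inequality does not work. From
\[
\int_\Omega (v_1^n - v_0)(\phi - u_1^n)\,dx + \tau_n[u_1^n,\phi-u_1^n]_s \geq 0
\]
you correctly note that the second term vanishes and $u_1^n \to u_0$, but there is no mechanism that identifies the weak $L^2$-limit of $v_1^n$ (which exists only along a subsequence, by boundedness) with $u_t^+(0)$. The quantity $u_t^+(0)$ is defined as the weak right limit of $u_t(t)$ for the \emph{limit} function $u$, and nothing in your argument connects that object to the single discrete velocity $v_1^n$. The paper instead fixes a small positive time $T^*$ lying on every grid (taking $n$ dyadic), sums the variational inequality for $i=1,\dots,m$ with $m\tau_n = T^*$, performs an Abel summation that produces the boundary term $\int_\Omega u_t^n(T^*)(\phi-u^n(T^*))\,dx$, passes $n\to\infty$ at this fixed $T^*$ using the pointwise weak convergence $u_t^n(T^*)\rightharpoonup u_t(T^*)$, and only \emph{then} sends $T^*\to 0^+$. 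The two limits must be taken in this order.

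\textbf{The BV-in-time structure (item 2, and the pointwise weak convergence needed above and for \eqref{eq:eneine}).} You are right that this is the essential new feature, but ``must be extracted from the monotonicity of the discrete energy'' is not how it goes, and the hint is misleading: energy monotonicity alone does not give a variation bound on $t\mapsto\int_\Omega u_t(t)\phi\,dx$. The paper obtains it directly from the one-sided Euler inequality. For fixed $\phi\geq 0$ in $\Hst(\Omega)$ one has
\[
\Big|\int_\Omega (v_i^n-v_{i-1}^n)\phi\,dx\Big| - \int_\Omega (v_i^n-v_{i-1}^n)\phi\,dx \leq 2\tau_n\big|[u_i^n,\phi]_s\big|,
\]
and summing over $i$ telescopes the signed part while the right-hand side is $O(1)$ by the energy estimate. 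Hence the functions $F^n(t)=\int_\Omega u_t^n(t)\phi\,dx$ are uniformly bounded in $BV(0,T)$, Helly gives pointwise convergence, and one identifies the limit with $F(t)=\int_\Omega u_t(t)\phi\,dx$. This BV property is what yields existence of $u_t^\pm$, the pointwise a.e.\ weak convergence $u_t^n(t)\rightharpoonup u_t(t)$ needed for the energy inequality, and the convergence at $T^*$ used in the previous paragraph.
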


\medskip

We remark here that this definition of weak solution is weaker than the one proposed in \cite{Maruo85, Eck05}, in which the authors construct a solution to \eqref{eq:obstaclewaves} as a limit of (energy preserving) solutions $u^n$ of regularized systems, where the constraint $u^n \geq g$ is turned into a penalization term in the equation. Furthermore, up to our knowledge, the problem of the existence of an energy preserving weak solution to \eqref{eq:obstaclewaves} is still open: one would expect the limit function in \cite{Maruo85, Eck05} to be the best known candidate, while a partial result for concave obstacles in $1$d was provided by Schatzman in \cite{Schatzman80}.

\subsection{Approximating scheme}\label{subsec:appschemeobtacle}
The idea is to replicate the scheme presented in Section \ref{sec:free} for the obstacle-free dynamic: define
\[
K_g = \{ u \in \Hst(\Omega) \,|\, u \geq g \text{ a.e. in } \Omega \}
\]
and, for any $n > 0$, let $\tau_n = T/n$. Define $u_{-1}^n = u_0 - \tau_n v_0$ and $u_0^n = u_0$, and construct recursively the family of functions $\{u_i^n\}_{i=1}^n \subset \Hst(\Omega)$ as
\[
u_i^n = \arg \min_{u \in K_g} J_i^n(u),
\]
with $J_i^n$ defined as in \eqref{eq:scheme}. Notice how the minimization is now over functions $u \geq g$ in $\Omega$ so that to respect the additional constraint introduced by the obstacle. Since $K_g \subset \Hst(\Omega)$ is convex, existence and uniqueness of each $u_i^n$ can be proved by means of standard arguments. Regarding the variational characterization of each minimizer $u_i^n$, we cannot take arbitrary variations $\phi \in \Hst(\Omega)$ (we may end up exiting the feasible set $K_g$), and so we need to be more careful: we take any test $\phi \in K_g$ and consider the function $(1-\varepsilon)u_i^n + \varepsilon \phi$, which belongs to $K_g$ for any sufficiently small positive $\varepsilon$. Thus, since $u_i^n$ minimizes $J_i^n$, we have the following inequality
\[
\frac{d}{d\varepsilon} J_i^n(u_i^n+\varepsilon (\phi-u_i^n)) |_{\varepsilon=0} \geq 0,
\]
which rewrites as
\begin{equation}\label{eq:vardis}
\int_{\Omega} \frac{u_i^n-2u_{i-1}^n+u_{i-2}^n}{\tau_n^2}(\phi-u_i^n)\,dx + [u_i^n , \phi-u_i^n]_{s} \geq 0 \quad \text{for all } \phi \in K_g.
\end{equation}
In particular, since every $\phi \geq u_i^n$ is an admissible test function, we also have
\begin{equation}\label{eq:vardis_simple}
\int_{\Omega} \frac{u_i^n-2u_{i-1}^n+u_{i-2}^n}{\tau_n^2}\phi\,dx + [u_i^n , \phi]_{s} \geq 0 \quad \text{for all } \phi \in \Hst(\Omega), \phi \geq 0.
\end{equation}
We define $\bar{u}^n$ and $u^n$ as, respectively, the piecewise constant and the piecewise linear interpolation in time of $\{u_i^n\}_i$ (as in \eqref{eq:uh}, \eqref{eq:uhbar}), and $v^n$ as the piecewise linear interpolant of velocities $v_i^n = (u_i^n-u_{i-1}^n)/\tau_n$, $0\leq i \leq n$. Using \eqref{eq:vardis_simple}, the analogue of \eqref{eq:ELnoobstacle} takes the following form
\[
\int_{0}^T \int_\Omega \left( \frac{u^n_t(t) - u^n_t(t-\tau_n)}{\tau_n} \right) \phi(t) \,dxdt + \int_{0}^T [ \bar{u}^n(t), \phi(t) ]_{s} \, dt \geq 0
\]
for all $\phi \in L^1(0,T;\Hst(\Omega))$, $\phi(t,x) \geq 0$ for a.e. $(t,x) \in (0,T)\times \Omega$.

In view of a convergence result, we observe that the same energy estimate of Proposition \ref{prop:keyestimate} extends to this new context: for any $n>0$, we have
\[
\norm{ u_t^n(t) }_{ L^2(\Omega) }^{ 2 } + [ \bar{u}^n(t) ]_{s}^{ 2 } \leq C(u_0, v_0)
\]
for all $t \in [0,T]$, with $C(u_0, v_0)$ a constant independent of $n$. The exact same proof of Proposition \ref{prop:keyestimate} applies: just observe that, taking $\phi = u_{i-1}^n$ in \eqref{eq:vardis}, one gets
\[
0 \leq \int_\Omega \frac{(u_{i}^n-2u_{i-1}^n+u_{i-2}^n)(u_{i-1}^n-u_{i}^n)}{\tau_n^2}\,dx + [u_i^n, u_{i-1}^n - u_i^n]_{s}
\]
and then the rest follows. Convergence of the interpolants is then a direct consequence. 

\begin{prop}[Convergence of $u^n$ and $\bar{u}^n$, obstacle case]\label{prop:convunobstacle}
	There exists a subsequence of steps $\tau_n \to 0$ and a function $u \in L^\infty(0,T;\Hst(\Omega)) \cap W^{1,\infty}(0,T;L^2(\Omega))$ such that
	%\footnote{sistemare tutti i dettagli (sul serio)}
	\[
	\begin{aligned}
	&u^n \to u \text{ in } C^0([0,T];L^2(\Omega)), &\quad& \bar{u}^n \weakstar u \text{ in } L^\infty(0,T;\Hst(\Omega)), \\
	&u_t^n \rightharpoonup^* u_t \text{ in } L^\infty(0,T;L^2(\Omega)), &\quad& u^n(t) \rightharpoonup u(t) \text{ in } \Hst(\Omega) \text{ for any } t \in [0,T], \\
	\end{aligned}
	\]
	and furthermore $u(t,x)\geq g(x)$ for a.e. $(t,x) \in [0,T]\times\Omega$.
\end{prop}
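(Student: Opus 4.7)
The strategy is to essentially reuse the arguments of Propositions \ref{prop:convun} and \ref{prop:convunbar}, since the sole analytic ingredient employed there --- the key energy estimate --- has just been shown to extend verbatim to the constrained scheme via the choice $\phi = u_{i-1}^n$ in \eqref{eq:vardis}. From this estimate I immediately read off that $u_t^n$ and $v^n$ are uniformly bounded in $L^\infty(0,T;L^2(\Omega))$ and that $[u^n(t)]_s$ and $[\bar u^n(t)]_s$ are uniformly bounded in $t$ and $n$; in particular, $u^n$ is uniformly Lipschitz in time as a curve in $L^2(\Omega)$, by exactly the H\"older argument used in Proposition \ref{prop:convun}.

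Next I would extract a subsequence along which $u^n \to u$ in $C^0([0,T];L^2(\Omega))$ by Ascoli--Arzel\`a: equicontinuity follows from the uniform Lipschitz-in-time bound in $L^2$, while pointwise-in-$t$ precompactness is provided by the uniform $\Hst$ bound combined with the compact embedding $\Hst(\Omega) \hookrightarrow L^2(\Omega)$. The weak-$*$ compactness of $u_t^n$ in $L^\infty(0,T;L^2(\Omega))$, together with integration by parts against test functions of the form $\phi(x)\eta(t)$ with $\phi \in L^2(\Omega)$ and $\eta \in C^1_c(0,T)$, identifies the weak-$*$ limit as $u_t$ and places $u$ in $W^{1,\infty}(0,T;L^2(\Omega))$. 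The pointwise weak convergence $u^n(t) \weak u(t)$ in $\Hst(\Omega)$ for every $t \in [0,T]$ then follows from the uniform $[u^n(t)]_s$ bound by a standard subsequence-and-uniqueness-of-limit argument using the already established $L^2$ convergence, which simultaneously yields $u \in L^\infty(0,T;\Hst(\Omega))$ by lower semicontinuity.

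For $\bar u^n$ the short estimate of Proposition \ref{prop:convunbar} carries over unchanged:
\[
\sup_{t\in[0,T]} ||\bar u^n(t) - u^n(t)||_{L^2(\Omega)}^2 \leq \tau_n^2 \sum_{i=1}^n ||v_i^n||_{L^2(\Omega)}^2 \leq C\tau_n \to 0,
\]
so $\bar u^n \to u$ strongly in $L^\infty(0,T;L^2(\Omega))$, and this combined with the uniform $\Hst$ bound upgrades to $\bar u^n \weakstar u$ in $L^\infty(0,T;\Hst(\Omega))$.

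The only genuinely new step is passing the obstacle constraint to the limit, and I expect this to be the least troublesome part rather than the main obstacle. Each $u_i^n \in K_g$, so $u_i^n \geq g$ a.e.\ in $\Omega$; the piecewise constant interpolant $\bar u^n$ inherits this bound pointwise, and the piecewise linear $u^n$ inherits it as a convex combination. Extracting a further subsequence from the strong $L^2$ convergence so that $u^n \to u$ a.e.\ on $(0,T)\times\Omega$, the inequality passes to $u(t,x) \geq g(x)$ a.e. In short, the substantive analytic work lives entirely in the key estimate; once that is secured, the statement reduces to a routine adaptation of the free-case propositions, with the single caveat that we no longer have any $H^{-s}$ control on $u_{tt}$ (the variational inequality \eqref{eq:vardis} is only one-sided), which is precisely why the conclusion here is correspondingly weaker than in Proposition \ref{prop:convun}.
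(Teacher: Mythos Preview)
Your proposal is correct and follows essentially the same route as the paper: the paper's own proof simply says to repeat the first half of the proof of Proposition \ref{prop:convun} together with Proposition \ref{prop:convunbar}, and then observes that $u(t,x)\geq g(x)$ is a direct consequence of $u_i^n \in K_g$. You have spelled out these steps in more detail (Ascoli--Arzel\`a with the compact embedding, the subsequence-and-uniqueness argument for pointwise weak $\Hst$ convergence, a.e.\ passage of the constraint), and your closing remark about the loss of $H^{-s}$ control on $u_{tt}$ is exactly the point the paper makes immediately after the proposition.
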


\begin{proof}
To obtain the existence of $u$ and all the convergences we can repeat the first half of the proof of Proposition \ref{prop:convun} and the proof of Proposition \ref{prop:convunbar}. The fact that $u(t,x)\geq g(x)$ for a.e. $(t,x) \in [0,T]\times\Omega$ is a direct consequence of the fact that $u_i^n \in K_g$ for all $n$ and $0\leq i\leq n$.
\end{proof}

The missing step with respect to the obstacle-free dynamic is that generally speaking $u_{tt} \notin L^\infty(0,T;H^{-s}(\Omega))$. The cause of such a behavior is clear already in $1$d: suppose the obstacle to be $g=0$ and imagine a flat region of $u$ moving downwards at a constant speed; when this region reaches the obstacle the motion cannot continue its way down (we need to stay above $g$) and so the velocity must display an instantaneous and sudden change in a region of non-zero measure (within our scheme the motion stops on the obstacle and velocity drops to $0$ on the whole contact region). Due to this possible behavior of $u_t$, we cannot expect $u_{tt}$ to posses the same regularity as in the obstacle-free case. Nevertheless, such discontinuities in time of $u_t$ are somehow controllable and we can still provide some sort of regularity results, which are collected in the following propositions. 

\begin{prop}\label{prop:FBV}
	Let $u$ be the weak limit obtained in Proposition \ref{prop:convunobstacle} and, for any fixed $0\leq \phi\in \Hst(\Omega)$, let $F \colon [0,T] \to \R$ be defined as
	\begin{equation}\label{eq:F}
	F(t) = \int_{\Omega}u_t(t)\phi\,dx.
	\end{equation}
	Then $F \in BV(0,T)$ and, in particular, $u^n_t(t) \weak u_t(t)$ in $L^2(\Omega)$ for a.e. $t \in [0,T]$.
\end{prop}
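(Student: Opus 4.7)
The plan is to transfer the BV information from the discrete scheme to the limit by a Helly-type argument, exploiting that the variational inequality \eqref{eq:vardis_simple} controls precisely the negative part of the increments of $\int_\Omega v_i^n\,\phi\,dx$ when $\phi \geq 0$.

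First I would work at the discrete level. For fixed $0\leq \phi\in\Hst(\Omega)$ set $F^n(t) = \int_\Omega u^n_t(t)\phi\,dx$, which on each interval $(t^n_{i-1},t^n_i]$ equals the constant $\int_\Omega v_i^n\phi\,dx$; thus the total variation of $F^n$ on $[0,T]$ is
\[
\operatorname{TV}(F^n) = \sum_{i=1}^n \left\lvert \int_\Omega (v_i^n-v_{i-1}^n)\phi\,dx \right\rvert.
\]
Using $|a|=a+2a^-$ and telescoping,
\[
\operatorname{TV}(F^n) = \int_\Omega (v_n^n-v_0^n)\phi\,dx + 2\sum_{i=1}^n \Big(\int_\Omega (v_i^n-v_{i-1}^n)\phi\,dx\Big)^-.
\]
The telescoping term is bounded by $2\sup_n\|v^n\|_{L^\infty(0,T;L^2)}\|\phi\|_{L^2}$, which is finite by Proposition \ref{prop:keyestimate}. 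For the negative parts, the variational inequality \eqref{eq:vardis_simple} applied with the test function $\phi\geq 0$ gives
\[
\int_\Omega (v_i^n-v_{i-1}^n)\phi\,dx \geq -\tau_n\,[u_i^n,\phi]_s,
\]
so each negative part is at most $\tau_n[u_i^n]_s[\phi]_s\leq C\tau_n[\phi]_s$ by Proposition \ref{prop:keyestimate}, and summing over $i$ produces a bound $CT[\phi]_s$. Hence $\operatorname{TV}(F^n)\leq C(u_0,v_0)(\|\phi\|_{L^2}+[\phi]_s)$ uniformly in $n$.

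Next I would pass to the limit. Helly's selection theorem gives a subsequence along which $F^n$ converges pointwise on $[0,T]$ to some $\widetilde F\in BV(0,T)$. On the other hand Proposition \ref{prop:convunobstacle} gives $u^n_t\weakstar u_t$ in $L^\infty(0,T;L^2(\Omega))$, hence for every $\eta\in C^1_c(0,T)$,
\[
\int_0^T F^n(t)\eta(t)\,dt \;\longrightarrow\; \int_0^T F(t)\eta(t)\,dt,
\]
so by dominated convergence $F=\widetilde F$ a.e. and therefore $F\in BV(0,T)$. For a general $\phi\in\Hst(\Omega)$ one splits $\phi=\phi^+-\phi^-$: both positive and negative parts still belong to $\Hst(\Omega)$ (truncation is continuous in the $[\cdot]_s$ semi-norm on $\Hst(\Omega)$), and by linearity $F\in BV(0,T)$ for such $\phi$ as well.

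Finally, to obtain $u^n_t(t)\weak u_t(t)$ in $L^2(\Omega)$ for a.e. $t$, I would fix a countable family $\{\phi_k\}\subset\Hst(\Omega)$ which is dense in $L^2(\Omega)$ (which exists since $\Hst(\Omega)\hookrightarrow L^2(\Omega)$ densely). Applying the previous step to each $\phi_k$ and using a diagonal subsequence, $F_k^n(t):=\int_\Omega u^n_t(t)\phi_k\,dx$ converges pointwise a.e. to $F_k(t)=\int_\Omega u_t(t)\phi_k\,dx$ for every $k$ simultaneously. Combined with the uniform bound $\|u^n_t(t)\|_{L^2}\leq C$ from Proposition \ref{prop:keyestimate} and the density of $\{\phi_k\}$ in $L^2(\Omega)$, a standard $3\varepsilon$ argument yields $u^n_t(t)\weak u_t(t)$ in $L^2(\Omega)$ for a.e. $t\in[0,T]$. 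The main technical obstacle is precisely the TV bound in the first step: it is essential that the variational inequality is used only with nonnegative test functions, which is why the decomposition $|a|=a+2a^-$ is what lets the one-sided inequality \eqref{eq:vardis_simple} do the work.
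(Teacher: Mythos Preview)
Your proof is correct and follows essentially the same route as the paper: bound the total variation of $F^n$ by splitting $|a|=a+2a^-$, use the one-sided inequality \eqref{eq:vardis_simple} to control the negative parts while telescoping the rest, apply Helly, and identify the limit via $u_t^n\weakstar u_t$. Your treatment is in fact slightly more explicit than the paper's in two spots---the bound on $a_i^-$ is cleaner than the paper's intermediate inequality, and you spell out the diagonal/countable-density argument needed to upgrade ``for each $\phi$, $F^n(t)\to F(t)$ a.e.'' to ``$u_t^n(t)\weak u_t(t)$ in $L^2$ a.e.'', which the paper leaves implicit.
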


\begin{proof}
Let us fix $\phi \in \Hst(\Omega)$ with $\phi \geq 0$, and consider the functions $F^n \colon [0,T] \to \R$ defined as
\begin{equation}\label{eq:Fn}
F^n(t) = \int_{\Omega}^{} u_t^n(t)\phi \,dx.
\end{equation}
Observe that $||F^n||_{L^1(0,T)}$ is uniformly bounded because $u_t^n$ is bounded in $L^2(\Omega)$ uniformly in $n$ and $t$. Furthermore, for every fixed $n > 0$ and $0\leq i\leq n$, we deduce from \eqref{eq:vardis_simple} that
\begin{equation}\label{eq:ELn2}
\Bigg\lvert \int_{\Omega}^{} (v_i^n - v_{i-1}^n)\phi\,dx \Bigg\rvert - \int_\Omega (v_i^n-v_{i-1}^n)\phi\,dx \leq \tau_n\abs{[u_i^n,\phi]_{s}} - \tau_n[u_i^n,\phi]_{s}.
\end{equation}
Summing over $i = 1,\dots,n$ and using Proposition \ref{prop:keyestimate}, we get
\[
\begin{aligned}
\sum_{i=1}^{n} &\Bigg\lvert \int_{\Omega} (v_i^n - v_{i-1}^n)\phi\,dx \Bigg\rvert \leq \int_\Omega v_{n}^n\phi \, dx - \int_\Omega v_0\phi\,dx + \sum_{i=1}^{n} \tau_n\abs{[u_i^n,\phi]_{s}} - \sum_{i=1}^{n} \tau_n[u_i^n,\phi]_{s} \\ &\leq ||v_n^n||_{L^2(\Omega)} ||\phi||_{L^2(\Omega)} + ||v_0||_{L^2(\Omega)} ||\phi||_{L^2(\Omega)} + 2\tau_n \sum_{i=1}^{n} \abs{[u_i^n,\phi]_{s}} \\ &\leq ||v_n^n||_{L^2(\Omega)} ||\phi||_{L^2(\Omega)} + ||v_0||_{L^2(\Omega)} ||\phi||_{L^2(\Omega)} + 2\tau_n \sum_{i=1}^{n} [u_i^n]_{s} [\phi]_{s} \\ &\leq C||\phi||_{H^s(\R^d)}
\end{aligned}
\]
with $C$ independent of $n$. Thus, $\{F^n\}_n$ is uniformly bounded in $BV(0,T)$ and by Helly's selection theorem there exists a function $\bar{F}$ of bounded variation such that $F^n(t) \to \bar{F}(t)$ for every $t \in (0,T)$.

Take now $\psi(t,x) = \phi(x) \eta(t)$ for $\eta \in C^\infty_c(0,T)$, using that $u_t^n \weakstar u_t$ in $L^\infty(0,T;L^2(\Omega))$, one has
\[
\begin{aligned}
&\int_{0}^{T}\int_{\Omega}^{} u_t(t)\psi \,dxdt = \lim_{n \to \infty} \int_{0}^{T} \int_\Omega u_t^n(t)\psi \,dxdt =
\lim_{n \to \infty} \int_{0}^{T} \int_\Omega u_t^n(t)\phi \,dx \,\eta(t) dt \\ &=\int_{0}^{T} \lim_{n \to \infty} \int_\Omega u_t^n(t)\phi \,dx \,\eta(t) \,dt = \int_{0}^{T} \bar{F}(t)\eta(t)\,dt
\end{aligned}
\]
where the passage to the limit under the sign of integral is possible due to the pointwise convergence of $F^n$ to $\bar{F}$ combined with the dominated convergence theorem. We conclude
\[
\int_{0}^{T} \left( \int_{\Omega} u_t(t)\phi \,dx -\bar{F}(t)\right) \eta(t)\,dt = 0
\]
and, by the arbitrariness of $\eta$, we have $F = \bar{F}$ for a.e. $t \in (0,T)$, which is to say $F \in BV(0,T)$. In particular,
\[
\int_{\Omega} u_t(t) \phi \,dx = F(t) = \lim_{n\to\infty} \int_{\Omega} u_t^n(t)\phi\,dx \quad \text{ for a.e. }t\in (0,T),
\]
meaning $u_t^n(t) \rightharpoonup u_t(t)$ in $L^2(\Omega)$ for almost every $t\in(0,T)$: indeed the last equality can first be extended to every $\phi \in \Hst(\Omega)$ (just decomposing $\phi=\phi^+-\phi^-$ in its positive and negative parts) and then to every $\phi \in L^2(\Omega)$ being $\Hst(\Omega) \subset L^2(\Omega)$ dense.
\end{proof}

\begin{remark}
	In the rest of this section we choose to use the ``precise representative'' of $u_t$ given by $u_t(t) = $ weak-$L^2$ limit of $u^n_t(t)$, which is then defined for all $t \in [0,T]$.
\end{remark}

\begin{prop}	
	Fix $0\leq \phi\in \Hst(\Omega)$ and let $F$ de defined as in \eqref{eq:F}. Then, for any $t \in (0,T)$, we have
	\[
	\lim_{r\to t^-} F(r) \leq \lim_{s\to t^+}F(s).
	\]
\end{prop}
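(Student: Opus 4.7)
The inequality asserts that jumps of $F$ can only be upward, which at the discrete level should be traceable back to the one-sided variational inequality \eqref{eq:vardis_simple}: velocity can instantaneously jump up (as the profile hits the obstacle and is bounced/stopped) but not down. The plan is to extract a telescoping ``one-sided almost-monotonicity'' estimate on $F^n$ and pass to the limit on a full-measure set of times.

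Concretely, I would first rewrite \eqref{eq:vardis_simple} as
\[
\int_\Omega (v_i^n - v_{i-1}^n)\phi\,dx \;\geq\; -\tau_n\,[u_i^n,\phi]_{s}
\]
for every $1\leq i\leq n$ (using $\phi\geq 0$). Summing this telescoping inequality from $i=j+1$ up to $i=k$ yields
\[
\int_\Omega v_k^n\phi\,dx - \int_\Omega v_j^n\phi\,dx \;\geq\; -\int_{t_j^n}^{t_k^n}[\bar u^n(\sigma),\phi]_{s}\,d\sigma,
\]
i.e.\ $F^n(s) - F^n(r) \geq -\int_{r}^{s}[\bar u^n(\sigma),\phi]_{s}\,d\sigma - O(\tau_n)$ whenever $r\in(t_{j-1}^n,t_j^n]$ and $s\in(t_{k-1}^n,t_k^n]$ with $j<k$, where the $O(\tau_n)$ error accounts for the mismatch between $r,s$ and the grid and is controlled by $\tau_n[u]_s[\phi]_s\leq C\tau_n[\phi]_s$ using Proposition \ref{prop:keyestimate}.

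Next I would pass to the limit $n\to\infty$ at a.e.\ pair $r<s$ in $(0,T)$. By Proposition \ref{prop:FBV} we have $F^n(r)\to F(r)$ and $F^n(s)\to F(s)$ for a.e.\ $r,s$ (these are precisely the continuity points of the $BV$ function $F$), while $\bar u^n\weakstar u$ in $L^\infty(0,T;\Hst(\Omega))$ combined with $\mathbf{1}_{[r,s]}(\cdot)\phi\in L^1(0,T;\Hst(\Omega))$ ensures $\int_r^s[\bar u^n(\sigma),\phi]_s\,d\sigma \to \int_r^s[u(\sigma),\phi]_s\,d\sigma$. Hence for a.e.\ $r<s$,
\[
F(s) - F(r) \;\geq\; -\int_r^s [u(\sigma),\phi]_{s}\,d\sigma.
\]

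Finally, for the given $t\in(0,T)$ I would pick sequences $r_m\nearrow t$ and $s_m\searrow t$ consisting of continuity points of $F$ (possible since $F\in BV(0,T)$ has at most countably many discontinuities). The right-hand side above is bounded by $(s_m-r_m)\,\|[u,\phi]_s\|_{L^\infty(0,T)}\leq C(s_m-r_m)[\phi]_s\to 0$ because $u\in L^\infty(0,T;\Hst(\Omega))$. Since $F\in BV$ has well-defined one-sided limits at every point, $F(r_m)\to\lim_{r\to t^-}F(r)$ and $F(s_m)\to\lim_{s\to t^+}F(s)$, giving the claim.

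The main technical point to be careful about is the double use of ``a.e.\ $t$'': the estimate only gives $F^n(t)\to F(t)$ at continuity points of $F$, so I must restrict the approximating sequences $r_m,s_m$ to such points, exploiting that they form a set of full measure. The discrete step is a direct consequence of dropping the positive term $|\int(v_i^n-v_{i-1}^n)\phi|$ already exploited in \eqref{eq:ELn2}, so no further combinatorial work is needed there.
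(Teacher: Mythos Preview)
Your proposal is correct and follows essentially the same line as the paper's proof: both start from the telescoping inequality coming from \eqref{eq:vardis_simple}, bound the right-hand side by a quantity of order $|s-r|$ (plus an $O(\tau_n)$ grid error), pass to the limit $n\to\infty$ using the pointwise convergence of $F^n$ established in Proposition \ref{prop:FBV}, and then let $r\nearrow t$, $s\searrow t$. The only cosmetic difference is that the paper immediately bounds the sum by $-2C\tau_n(s_n-r_n)\|\phi\|_{H^s(\R^d)}$ via Cauchy--Schwarz and Proposition \ref{prop:keyestimate}, whereas you carry the integral $-\int_r^s[\bar u^n(\sigma),\phi]_s\,d\sigma$ through the limit before estimating; this extra step is unnecessary (the uniform bound $|[u(\sigma),\phi]_s|\leq C[\phi]_s$ is all that is used in the end) but does no harm.
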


\begin{proof}
	First of all we observe that the limits we are interested in exist because $F \in BV(0,T)$. Fix then $t \in (0,T)$ and let $0<r<t<s<T$. For each $n$ define $r_n$ and $s_n$ such that $r \in (t^n_{r_n-1},t^n_{r_n}]$ and $s \in (t^n_{s_n-1},t^n_{s_n}]$. If we consider the functions $F^n$ defined in \eqref{eq:Fn} and take into account \eqref{eq:ELn2}, one can see that
	\[
	\begin{aligned}
	F^n(s)-F^n(r) &= \int_{\Omega} (u_t^n(s)-u_t^n(r))\phi\,dx = \int_{\Omega} (v^n_{s_n}-v^n_{r_n})\phi \,dx \\
	&= \sum_{i=r_n+1}^{s_n} \int_{\Omega} (v^n_{i}-v^n_{i-1})\phi \,dx \geq \tau_n \sum_{i=r_n+1}^{s_n} \left( [u_i^n,\phi]_{s} - \abs{[u_i^n,\phi]_{s}} \right) \\
	&\geq -2C \tau_n (s_n-r_n) ||\phi||_{H^s(\R^d)}
	\end{aligned}
	\]
	for some positive constant $C$ independent of $n$. Since $|s-r|\geq |t^n_{s_n-1}-t^n_{r_n}|=\tau_n(s_n-1-r_n)$ we can conclude
	\[
	F^n(s)-F^n(r) \geq -2C|s-r|\cdot||\phi||_{H^s(\R^d)}-2C\tau_n||\phi||_{H^s(\R^d)}.
	\]
	Passing to the limit $n \to \infty$ we get $F(s)-F(r) \geq -2C|s-r|\cdot||\phi||_{H^s(\R^d)}$, which in turn implies the conclusion.
\end{proof}
The last result tells us that the velocity $u_t$ does not present sudden changes in regions where it is positive, accordingly with the fact that whenever we move upwards there are no obstacles to the dynamic and $u_t$ is expected to have, at least locally in time and space, the same regularity it has in the obstacle-free case.

We eventually switch prove conditions 2, 3 and 4 of our definition of weak solution, thus proving Theorem \ref{thm:main2}.

\begin{proof}[Proof of Theorem \ref{thm:main2}]
	Let $u$ be the limit function obtained in Proposition \ref{prop:convunobstacle}. We verify one by one the four conditions required in Definition \ref{def:weakobst}.

	\emph{(1.)} The first condition is verified thanks to Proposition \ref{prop:convunobstacle}.
	
	\emph{(2.)} Existence of weak left and right derivatives $u_t^{\pm}$ on $[0,T]$ follows from Proposition \ref{prop:FBV}: just observe that, for any fixed $\phi \in \Hst(\Omega)$, the function
	\[
	F(t) = \int_{\Omega}u_t(t)\phi\,dx
	\]
	is $BV(0,T)$ and thus left and right limits of $F$ are well defined for any $t \in [0,T]$. This, in turn, implies condition 2. in our definition of weak solution.
	
	\emph{(3.)} For $n > 0$ and any test function $\phi \in W^{1,\infty}(0,T;L^2(\Omega)) \cap L^1(0,T;\Hst(\Omega))$, with $\phi \geq 0$, $\text{spt}\,\phi \subset [0,T)$, we recall that
	\[
	\int_{0}^T \int_\Omega \left( \frac{u^n_t(t) - u^n_t(t-\tau_n)}{\tau_n} \right) \phi(t) \,dxdt + \int_{0}^T [ \bar{u}^n(t), \phi(t) ]_{s} \, dt \geq 0.
	\]
	Thanks to Proposition \ref{prop:convunobstacle}, we have
	\[
	\begin{aligned}
	\int_{0}^{T} [\bar{u}^n(t), \phi(t)]_{s} \, dt \to \int_{0}^{T} [u(t), \phi(t)]_{s} \, dt \quad \text{as } n \to \infty
	\end{aligned}
	\]
	while, on the other hand, we also have
	\[
	\begin{aligned}
	& \int_{0}^{T} \int_\Omega \frac{u^n_t(t)-u^n_t(t-\tau_n)}{\tau_n}\,\phi(t) \,dxdt 
	 = \int_{0}^{T-\tau_n}\int_\Omega u^n_t(t) \left( \frac{\phi(t)-\phi(t+\tau_n)}{\tau_n} \right) \, dxdt \\
	&- \int_{0}^{\tau_n} \int_\Omega \frac{v_0}{\tau_n}\,\phi(t) \, dxdt + \int_{T-\tau_n}^{T} \int_\Omega \frac{u^n_t(t)}{\tau_n}\,\phi(t) \, dxdt \\
	& \to \int_{0}^{T} \int_\Omega u_t(t)(-\phi_t(t)) \, dxdt - \int_\Omega v_0\,\phi(0) \, dx + 0 \quad \text{as } n \to \infty.
	\end{aligned}
	\]
	This proves condition 3. for weak solutions.
	
	\emph{(4.)} The fact that $u(0) = u_0$ is a direct consequence of $u^n(0)=u_0$ and of the convergence of $u^n$ to $u$ in $C^0([0,T];L^2(\Omega))$.
	We are left to check the initial condition on velocity. Suppose, without loss of generality, that the sequence $u^n$ is constructed by taking $n \in \{2^m \,: m > 0\}$ (each successive time grid is obtained dividing the previous one).  Fix then $n$ and $\phi \in K_g$, let $T^* = m\tau_n$ for $0 \leq m \leq n$ (i.e. $T^*$ is a ``grid point''). Let us evaluate
	\[
	\begin{aligned}
	& \int_0^{T^*} \int_{\Omega}^{} \frac{u_t^n(t)-u_t^n(t-\tau_n)}{\tau_n} (\phi - \bar{u}^n(t)) = \sum_{i=1}^m \int_{t_{i-1}^n}^{t_i^n} \int_\Omega \frac{u_i^n-2u_{i-1}^n+u_{i-2}^n}{\tau_n^2}(\phi-u_i^n) \\
	& = \int_{\Omega}^{} \sum_{i=1}^m \frac{u_i^n-2u_{i-1}^n+u_{i-2}^n}{\tau_n}(\phi-u_i^n) = \int_{\Omega}^{} \sum_{i=1}^m (v_i^n-v_{i-1}^n)(\phi-u_i^n) \\
	&= -\int_{\Omega}^{} v_0^n(\phi-u_1^n)\,dx + \int_\Omega v_m^n(\phi-u_m^n) \,dx + \tau_n \sum_{i=1}^{m-1} \int_\Omega v_i^nv_{i-1}^n\,dx \\
	&= -\int_{\Omega}^{} v_0(\phi-u_n(\tau_n))\,dx + \int_\Omega u_t^n(T^*)(\phi-u^n(T^*)) \,dx + \tau_n \sum_{i=1}^{m-1} \int_\Omega v_i^nv_{i-1}^n\,dx. \\
	\end{aligned}
	\]
	Using \eqref{eq:vardis} we observe that
	\[
	\int_{0}^{T^*} \int_\Omega \frac{u^n_t(t) - u^n_t(t-\tau_n)}{\tau_n} ( \phi - \bar{u}^n(t) ) \,dxdt + \int_{0}^{T^*} [ \bar{u}^n(t), \phi-\bar{u}^n(t) ]_{s} \, dt \geq 0,
	\]
	which combined with the above expression and previous estimates on $u_i^n$ and $v_i^n$ leads to
	\[
	\begin{aligned}
	&-\int_{\Omega}^{} v_0(\phi-u_n(\tau_n))\,dx + \int_\Omega u_t^n(T^*)(\phi-u^n(T^*)) \,dx \geq\\
	& -\tau_n \sum_{i=1}^{m-1} \int_\Omega v_i^nv_{i-1}^n\,dx - \tau_n \sum_{i=1}^{m} [u_i^n, \phi-u_i^n]_{s} \geq -CT^* - CT^* ||\phi||_{H^s(\R^d)}.
	\end{aligned}
	\]
	Passing to the limit as $n \to \infty$, using $u^n(\tau_n) \to u(0)$ and $u_t^n(T^*) \rightharpoonup u_t(T^*)$ (due to the use of the precise representative), we get
	\[
	\begin{aligned}
	&-\int_{\Omega}^{} v_0(\phi-u(0))\,dx + \int_\Omega u_t(T^*)(\phi-u(T^*)) \,dx \geq -CT^* - C||\phi||_{H^s(\R^d)} T^*.
	\end{aligned}
	\]
	Taking now $T^* \to 0$ along a sequence of ``grid points'' we have
	\[
	\int_\Omega (u_t^+(0)-v_0)(\phi-u(0)) \,dx \geq 0.
	\]
	And this completes the first part of the proof. We are left to prove the energy inequality \eqref{eq:eneine}.	For this, recall that from Remark \ref{rem:energy} it follows that, for all $n > 0$,
		\[
		||v^n(t)||_{L^2(\Omega)}^2 + [u^n(t)]_{s}^2 \leq ||v_0||_{L^2(\Omega)}^2 + [u_0]_{s}^2 \quad \text{for all }t\in[0,T].
		\]
		Passing to the limit as $n \to \infty$ we immediately get \eqref{eq:eneine}.
\end{proof}

We conclude this section with some remarks and observations about the solution $u$ obtained through the proposed semi-discrete convex minimization scheme in the scenario $s = 1$. First of all we identify the weak solution $u$ obtained above to be a more regular solution whenever approximations $u^n$ stay strictly above $g$.

\begin{prop}[Regions without contact]\label{prop:nocontact}
	Let $s = 1$ and, for $\delta > 0$, suppose there exists an open set $A_\delta \subset \Omega$ such that $u^n(t,x) > g(x) + \delta$ for a.e. $(t,x) \in (0,T)\times \Omega$ and for all $n > 0$. Then $u_{tt} \in L^\infty(0,T;H^{-1}(A_\delta))$ and $u$ satisfies \eqref{eq:eqweak} for all $\phi \in L^{1}(0,T;H^1_0(A_\delta))$.
\end{prop}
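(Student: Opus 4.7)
The plan is to exploit the strict gap $u^n > g + \delta$ on $A_\delta$ in order to upgrade the variational inequality \eqref{eq:vardis} to an equality for test functions supported in $A_\delta$, and then to reproduce the passage-to-the-limit argument of Section \ref{sec:free} in this localized setting.

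First I would fix any $\psi \in C^\infty_c(A_\delta)$ and set $M = \|\psi\|_{\infty}$. By hypothesis $u_i^n \geq g + \delta$ a.e.\ on $A_\delta$, so for every $\varepsilon \in (0, \delta/M)$ both $u_i^n \pm \varepsilon \psi$ still belong to $K_g$: outside $\mathrm{spt}\,\psi$ they coincide with $u_i^n \geq g$, while on the support $u_i^n \pm \varepsilon \psi \geq g + \delta - \varepsilon M > g$. Plugging $\phi = u_i^n \pm \varepsilon \psi$ into \eqref{eq:vardis} and dividing by $\pm \varepsilon$ gives two opposite inequalities, so the variational inequality collapses to the equality
\begin{equation*}
\int_\Omega \frac{u_i^n - 2u_{i-1}^n + u_{i-2}^n}{\tau_n^2}\,\psi\,dx + [u_i^n, \psi]_{1} = 0 \qquad \forall \psi \in C^\infty_c(A_\delta).
\end{equation*}
Assembling the piecewise interpolations as in \eqref{eq:ELn} then yields
\begin{equation*}
\int_0^T \int_\Omega v_t^n(t)\,\phi(t)\, dx\,dt + \int_0^T [\bar{u}^n(t), \phi(t)]_{1}\, dt = 0 \qquad \forall \phi \in L^1(0,T; C^\infty_c(A_\delta)).
\end{equation*}

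Next, Proposition \ref{prop:keyestimate} together with Cauchy--Schwarz in the $[\,\cdot\,]_1$ bilinear form give $\|v_t^n(t)\|_{H^{-1}(A_\delta)} \leq [\bar{u}^n(t)]_1 \leq C$ uniformly in $n$ and $t$. Up to a subsequence $v_t^n \weakstar u_{tt}$ in $L^\infty(0,T; H^{-1}(A_\delta))$, which proves $u_{tt} \in L^\infty(0,T; H^{-1}(A_\delta))$. Passing to the limit in the identity above, using the convergences of Proposition \ref{prop:convunobstacle}, I obtain
\begin{equation*}
\int_0^T \int_\Omega u_{tt}(t)\,\phi(t)\, dx\,dt + \int_0^T [u(t), \phi(t)]_{1}\, dt = 0 \qquad \forall \phi \in L^1(0,T; C^\infty_c(A_\delta)),
\end{equation*}
and by density of $C^\infty_c(A_\delta)$ in $H^1_0(A_\delta)$ combined with the uniform $H^{-1}(A_\delta)$ bound on $u_{tt}$, this extends to all $\phi \in L^1(0,T; H^1_0(A_\delta))$, which is precisely \eqref{eq:eqweak} on $A_\delta$.

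The main obstacle is the very first step: in order to absorb the $\delta$-gap, the test functions must be genuinely bounded in $L^\infty$ (not merely in $\tilde{H}^1$), so for $d \geq 2$ the argument cannot be run directly on $H^1_0(A_\delta)$ and must start from $C^\infty_c(A_\delta)$, closing by density only at the end. Once this localization trick is in place, the proof is a routine localization of the argument used for Theorem \ref{thm:main1}.
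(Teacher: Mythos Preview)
Your proposal is correct and follows essentially the same route as the paper: upgrade \eqref{eq:vardis} to an equality for test functions supported in $A_\delta$, deduce a uniform $H^{-1}(A_\delta)$ bound on $v_t^n$, and then repeat the limit argument of Proposition~\ref{prop:convun} locally. The one genuine difference is that the paper runs the first step directly with $\phi \in H^1_0(\Omega)$ supported in $A_\delta$, tacitly assuming $\phi \in L^\infty$ so that $u_i^n + \varepsilon\phi \geq g$ for small $\varepsilon$; you instead start from $\psi \in C^\infty_c(A_\delta)$ and close by density at the end, which is the technically sound way to handle this in dimensions $d \geq 2$ where $H^1_0 \not\hookrightarrow L^\infty$. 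Your version is therefore slightly more careful on exactly the point you flag as the ``main obstacle''.
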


\begin{proof}
	Take $\phi \in H^1_0(\Omega)$ with $\textup{spt}\,\phi \subset A_\delta$. Then, for every $n$ and $0 \leq i \leq n$, the function $u^n_i + \varepsilon\phi$ belongs to $K_g$ for $\varepsilon$ sufficiently small: indeed, for $x \in A_\delta$, we have $u_i^n(x) + \epsilon \phi(x) \geq g(x) + \delta + \epsilon\phi(x) \geq g(x)$ for small $\epsilon$, regardless of the sign of $\phi(x)$. In particular, equation \eqref{eq:vardis_simple} can be written as
	\[
	\int_{\Omega} \frac{u_i^n-2u_{i-1}^n+u_{i-2}^n}{\tau_n^2}\phi\,dx + \int_\Omega \nabla u_i^n \cdot  \nabla \phi\,dx = 0 \quad \text{for all } \phi \in H^1_0(\Omega), \textup{spt}\,\phi \subset A_\delta.
	\]
	This equality allows us to carry out the second part of the proof of Proposition \ref{prop:convun}, so that, in the same notation, we can prove $v^n_t(t)$ to be bounded in $H^{-1}(A_\delta)$ uniformly in $t$ and $n$. Thus, $v \in W^{1,\infty}(0,T;H^{-1}(A_\delta))$ and
	\[
	v^n \rightharpoonup^* v \text{ in } L^\infty(0,T;L^2(A_\delta)) \quad\text{and}\quad v^n \rightharpoonup^* v \text{ in } W^{1,\infty}(0,T;H^{-1}(A_\delta)).
	\]
	Localizing everything on $A_\delta$, we can prove $v_t = u_{tt}$ in $A_\delta$ so that
	\[
	u_{tt} \in L^\infty(0,T;H^{-1}({A_\delta})),
	\]
	and equation \eqref{eq:eqweak} follows by passing to the limit as done in the proof of Theorem \ref{thm:main1} (cf. \cite{SvadlenkaOmata08, DaLa11}).
\end{proof}

\begin{remark}[One dimensional case with $s=1$]{\rm 
	In the one dimensional case and for $s=1$ the analysis boils down to the problem considered by Kikuchi in \cite{Ki09}. In this particular situation a stronger version of Proposition \ref{prop:nocontact} holds: suppose that $\Omega = [0,1]$, then for any $\phi \in C^0_0([0,T),L^2(0,1)) \cap W^{1,2}_0((0,T)\times(0,1))$ with $\textup{spt}\,\phi \subset \{ (t,x) \,: u(t,x)>0 \}$,
	\[
	-\int_{0}^{T} \int_0^1 u_t\phi_t \, dxdt + \int_{0}^{T} \int_0^1 u_x \phi_x \, dxdt - \int_0^1 v_0\,\phi(0) \, dx = 0.
	\]
}
\end{remark}

\section{Numerical implementation and open questions}
%\textcolor{red}{sono ancora restio a mettere ``simulations'' perché alla fine c'è solo un esempio...}
The constructive scheme presented in the previous sections can be easily used to provide a numerical simulation of the relevant dynamic, at least in the case $s = 1$ where we can employ a classical finite element discretization. However, we observe that a similar finite element approach can be extended to the fractional setting $s < 1$ following for example the pipeline described in \cite{AiGl18, AiGl17}.

Minimization of energies $J_i^n$ can be carried out by means of a piecewise linear finite element approximation in space: given a triangulation $\mathcal{T}_h$ of the domain $\Omega$ we introduce the classical space
\[
X_h^1 = \{ u_h \in C^0(\bar{\Omega}) \,:\, u_h |_K \in \mathbb{P}_1(K),\text{ for all } K \in \mathcal{T}_h \}.
\]
For $n > 1$, $0 < i \leq n$, and given $u_{i-1}^n, u_{i-2}^n \in X_h^1$, we optimize $J_i^n$ among functions in $X_h^1$ respecting the prescribed Dirichlet boundary conditions (which are local because $s=1$). We get this way a finite dimensional optimization problem for the degrees of freedom of $u_i^n$ and we solve it by a gradient descend method combined with a dynamic adaptation of the descend step size.

\begin{figure}[tbh]
	\centering
	\begin{tabular}{cc}
		\includegraphics[width=0.4\linewidth]{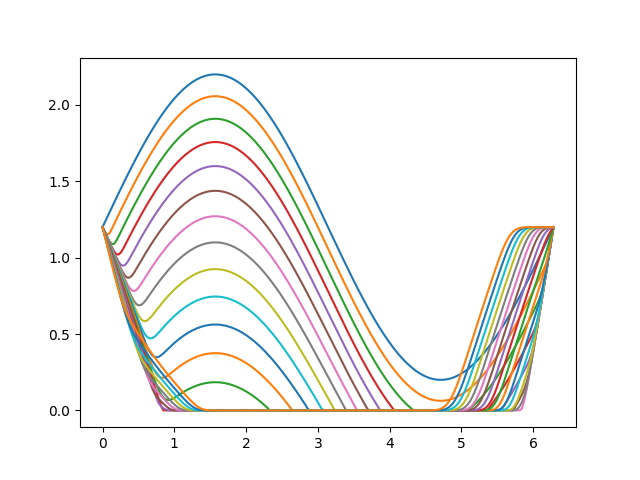}
		\includegraphics[width=0.7\linewidth]{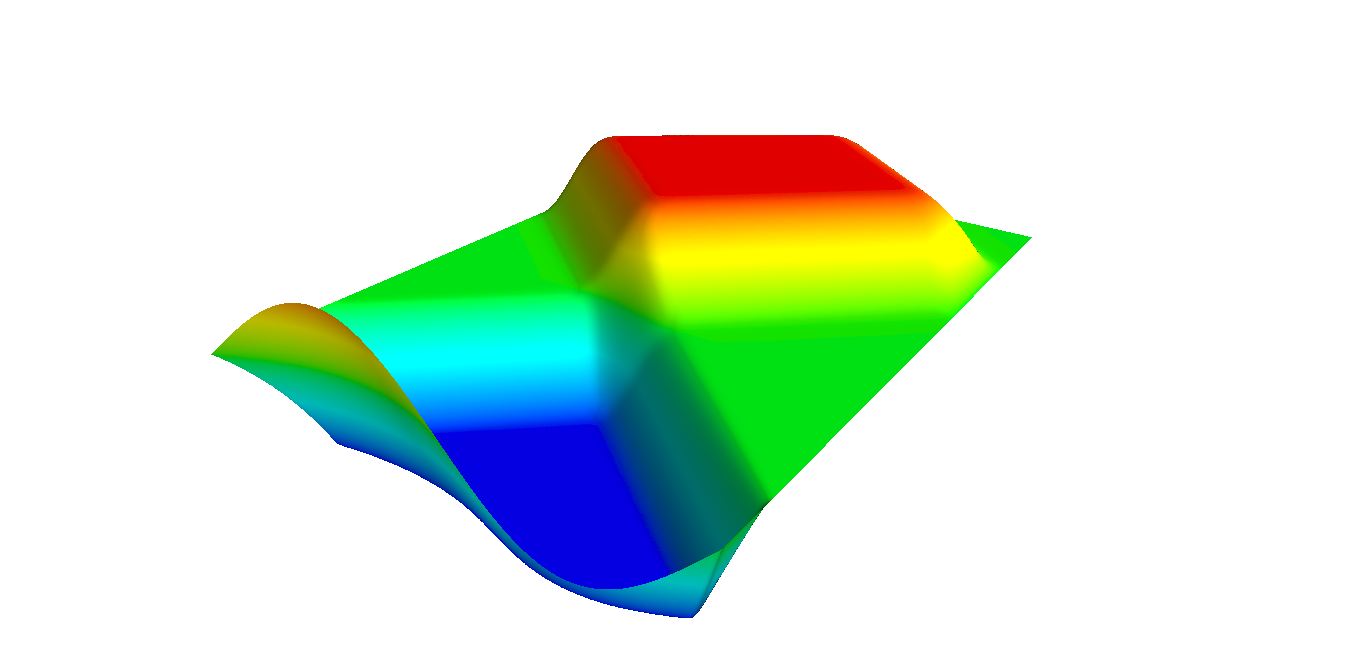}
	\end{tabular}
	\caption{Time evolution of the solution till $t = 1.5$ (left) and space-time depiction of the same evolution till $T = 10$ (right).}
	\label{fig:1d_exe}
\end{figure}

In the simulation in figure \ref{fig:1d_exe} we take $\Omega = (0,2\pi)$ and $u_0(x) = \sin(x) + 1.2$, with a constant initial velocity of $-2$ which pushes the string towards the obstacle $g=0$. The boundary conditions are set to be $u(t,0) = u(t,2\pi) = 1.2$ and the simulation is performed up to $T = 10$ using a uniform grid with $h = 2\pi/200$ and a time step $\tau = 1/100$. We can see how the profile stops on the obstacle after impact (blue region in the right picture of figure \ref{fig:1d_exe}) and how the impact causes the velocity to drop to $0$ and thus a loss of energy (as displayed in figure \ref{fig:1d_exe_E}). As soon as the profile leaves the obstacle the dynamic goes back to a classical wave dynamic and energy somehow stabilizes even if, as expected, it is not fully conserved from a discrete point of view. Due to energy dissipation at impact times, in the long run we expect the solution to never hit the obstacle again because the residual energy will only allow the profile to meet again the obstacle at $0$ speed, i.e. without any loss of energy.
\begin{figure}[tbh]
	\centering
	\begin{tabular}{cc}
		\includegraphics[width=0.4\linewidth]{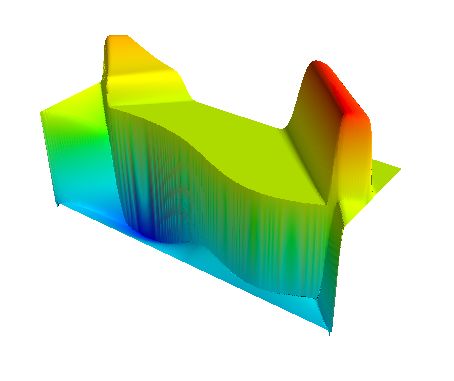} &
		\includegraphics[width=0.4\linewidth]{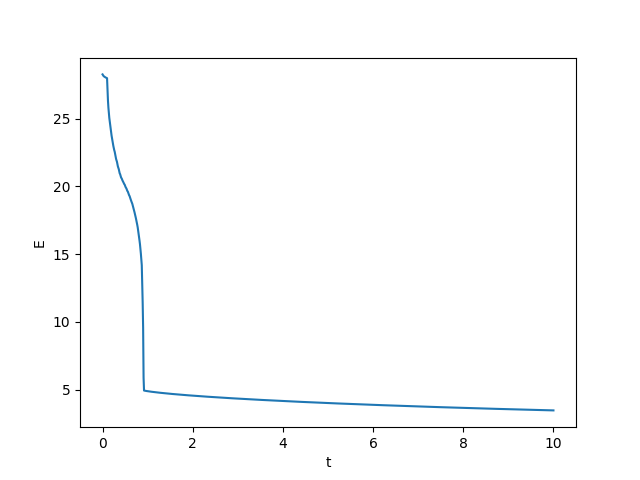}
	\end{tabular}
	\caption{Time evolution of the velocity up to $t = 2$ (left) and energy (right).}
	\label{fig:1d_exe_E}
\end{figure}
Thus, also in higher dimension, we expect the solution $u$ obtained through the proposed scheme to become an obstacle-free solution of the wave equation as soon as the energy of the system drops below a certain value, preventing this way future collisions. This can be roughly summarized in the following conjecture.
\begin{conj}[Long time behavior]\label{conj:1}
Let $s = 1$ and, given an obstacle problem in the form of equation \eqref{eq:obstaclewaves}, let $u$ be the weak solution obtained through the convex minimization approach of Section \ref{subsec:appschemeobtacle}. Then, at least for sufficiently regular obstacles $g$, there exists $\bar{t} > 0$ such that $E(u(t))$ is constant for any $t > \bar{t}$.
\end{conj}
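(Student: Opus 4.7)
The plan is to exploit the fact that the scheme dissipates energy precisely at ``hard'' contact times and to show that the total amount of energy available for dissipation is finite, forcing such contacts to eventually cease. First, I would establish that $E(t) = \|u_t(t)\|_{L^2(\Omega)}^2 + [u(t)]_s^2$ is non-increasing in $t$: this follows by passing to the limit in the discrete inequality $E_i^n \leq E_{i-1}^n$ from Remark \ref{rem:energy}, using the weak/weak-$*$ lower semicontinuity of the norms on the convergences provided by Proposition \ref{prop:convunobstacle} and the precise representative of $u_t$ introduced after Proposition \ref{prop:FBV}. Since $E$ is bounded below by $[u_\star]_s^2$, where $u_\star$ denotes the unique minimizer of $\tfrac12 [\,\cdot\,]_s^2$ on $K_g$ (the stationary elliptic obstacle problem), the limit $E_\infty = \lim_{t \to \infty} E(t)$ exists.

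The heart of the argument is a quantitative dissipation estimate at each contact: I would prove that for every $t$ at which the measure of the contact set $\{u(t,\cdot)=g\}$ is positive and the left trace $u_t^-(t)$ is non-trivial on it, one has
\begin{equation*}
E(t^-) - E(t^+) \geq c \int_{\{u(t,\cdot)=g\}} |u_t^-(t,x)|^2 \, dx
\end{equation*}
for a universal $c>0$. Such an estimate would be obtained by carefully testing the discrete variational inequality \eqref{eq:vardis} with $\phi = u_{i-1}^n$ across the transition step, tracking where $u_i^n$ is forced onto the obstacle (so that the positive part of $u_i^n - 2u_{i-1}^n + u_{i-2}^n$ produces a $\delta$-type Radon contribution in time), and then passing to the limit using the $BV$ structure of $t \mapsto \int u_t(t)\phi\,dx$ from Proposition \ref{prop:FBV}. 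Summability of dissipations, $\sum_{t} (E(t^-)-E(t^+)) \leq E(0)$, would then imply that impact speeds on contact sets tend to zero.

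To close the argument I would compare the dynamics with the elliptic equilibrium $u_\star$. Setting $w = u - u_\star$, the free-wave energy of $w$ is close to $E(t) - [u_\star]_s^2$ up to a cross term that vanishes at equilibrium, and the constraint becomes $w \geq g - u_\star$, a function which is strictly negative on every compact subset of the non-contact set of $u_\star$. Combined with the embedding $\Hst(\Omega) \hookrightarrow C^{0,\alpha}$ (or $L^q$) in the relevant dimension and sufficient regularity of $g$, if the residual energy $\|w_t\|_{L^2}^2 + [w]_s^2$ drops below an explicit threshold then $w$ can no longer reach the constraint, and after time $\bar t$ the evolution is governed by the free fractional wave equation, along which $E$ is conserved by Theorem \ref{thm:main1}. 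The main obstacle I anticipate is the quantitative dissipation bound: weak-$*$ compactness alone is not enough to capture the jump of $u_t$ on the contact set, and one likely needs either additional regularity of $g$ (e.g.\ semi-concavity) to rule out Zeno-type accumulations of contact times and to gain strong compactness of $u_t$ along suitable time slices, or a finer analysis of the first variation at contact in the spirit of \cite{Schatzman80, Ki09}.
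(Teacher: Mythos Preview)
The statement you are addressing is \emph{Conjecture~\ref{conj:1}}: the paper does not prove it, nor does it sketch an argument. It is formulated as an open question motivated by the numerical experiments of the final section, and the authors explicitly present it as an expectation (``we expect the solution $u$ \dots to become an obstacle-free solution''). There is therefore no proof in the paper to compare your proposal against; what you have written is an outline of an attack on an open problem.

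On the substance of your plan: the overall strategy---monotonicity of $E$, a quantitative lower bound on the energy drop at each ``hard'' contact, summability forcing impacts to cease, and then free evolution---is natural, but several steps are genuine gaps rather than routine details. First, the paper only proves $E(t)\le E(0)$ (Theorem~\ref{thm:main2}), not $E(t_2)\le E(t_1)$ for $t_1<t_2$: passing from $E_i^n\le E_{i-1}^n$ to monotonicity of the limit requires \emph{strong} convergence of $u_t^n(t)$ and $[u^n(t)]_s$ at the earlier time, which you do not have. Second, your key dissipation inequality $E(t^-)-E(t^+)\ge c\int_{\{u=g\}}|u_t^-|^2$ is precisely the missing ingredient; the $BV$ structure of $t\mapsto\int u_t\phi$ from Proposition~\ref{prop:FBV} gives existence of one-sided limits but no lower bound on the jump, and testing \eqref{eq:vardis} with $\phi=u_{i-1}^n$ yields only the qualitative inequality already used in the paper. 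Third, your threshold argument relies on $\Hst(\Omega)\hookrightarrow L^\infty$ to conclude that small residual energy of $w=u-u_\star$ prevents contact; for $s=1$ this embedding fails as soon as $d\ge 2$, so the step ``if the residual energy drops below an explicit threshold then $w$ can no longer reach the constraint'' does not go through in the generality of the conjecture. Finally, even if $u_\star$ itself touches $g$ (which is generic when $g>0$ somewhere), perturbations with arbitrarily small energy still touch the obstacle, so the comparison with the elliptic equilibrium cannot by itself rule out future contacts. Your closing paragraph correctly identifies these as the real obstructions; at present the proposal is a reasonable heuristic roadmap, not a proof.
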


Alongside the previous conjecture, we observe that the solution $u$ obtained here seems to be, among all possible weak solutions, the one dissipating its kinetic energy at highest rate, when colliding with the obstacle $g$, and so the one realizing the ``adherent'' behavior we mentioned before. At the same time, from the complete opposite perspective, one could ask if it is possible to revise the scheme so that to obtain energy preserving approximations $u^n$, and try to use these approximations to provide an energy preserving weak solution (maybe under suitable additional hypothesis on the obstacle).

As already observed in the introduction, the proposed method can be extended to the case of semi-linear wave equations of the type
\[
u_{tt} + (-\Delta)^s u + f(u) = 0
\]
with $f$ a suitable function, possibly non-smooth. For example, one can consider $f$ to be the (scaled) derivative of a balanced, double-well potential, e.g. $f(u) = \frac{1}{\epsilon^2}(u^3-u)$ for $\epsilon > 0$: certain solutions of that equation are intimately related to timelike minimal hypersurfaces, i.e. with vanishing mean curvature with respect to Minkowski space-time metric \cite{del2018interface, jerrard2011defects, bellettini2010time}. On the other hand, as we said in the introduction, one could also manage adhesive type dynamics assuming $f$ to be the (non-smooth) derivative of a smooth potential $\Phi$, as it is done in \cite{CocliteFlorioLigaboMaddalena17}.

We eventually observe that the proposed approximations $u^n$ can be constructed, theoretically and numerically, also for a double obstacle problem, i.e. $g(x) \leq u(t,x) \leq f(x)$ for a suitable lower obstacle $g$ and upper obstacle $f$. However, in this new context, the previous convergence analysis cannot be replicated because even the basic variational characterization \eqref{eq:vardis_simple} is generally false and a more localized analysis would be necessary. Anyhow, also in this situation one would expect the solution to behave like an obstacle-free solution after some time, as suggested in Conjecture \ref{conj:1}.

\section*{Acknowledgements}

The authors are partially supported by GNAMPA-INdAM. The second author acknowledges partial support
by the University of Pisa Project PRA 2017-18.

\bibliographystyle{plain}
\bibliography{bibliography}

\end{document}